\numberwithin{equation}{section}
\newtheorem{theorem}{Theorem}
\numberwithin{theorem}{section}
\theoremstyle{definition}
\newtheorem{definition}[theorem]{Definition}
\theoremstyle{remark}
\newtheorem{remark}[theorem]{Remark}
\newcommand{\cF}{\mathcal{F}}
\newcommand{\mf}[1]{\mathfrak{#1}} 
\newcommand{\ux}{\underline{x}}
\newcommand{\uy}{\underline{y}}
\newcommand{\uz}{\underline{z}}
\newcommand{\upx}{\partial_{\underline{x}}}
\newcommand{\br}[2]{\genfrac{}{}{0pt}{}{#1}{#2}}
\begin{document}
\title[On the radially deformed Fourier transform]{On the radially deformed Fourier transform}

\author{Hendrik De Bie}
\address{Department of Electronics and Information Systems \\Faculty of Engineering and Architecture\\Ghent University\\Krijgslaan 281, 9000 Gent\\ Belgium.}
\email{Hendrik.DeBie@UGent.be}
	
\author{Ze Yang}
\address{Department of Electronics and Information Systems \\Faculty of Engineering and Architecture \\Ghent University\\Krijgslaan 281, 9000 Gent\\ Belgium.}
\email{zeyang.cn@outlook.com}

\date{\today}
\keywords{Generalized Fourier transform, integral kernel, Laplace transform, Poisson kernel, Mittag-Leffler function.}
\subjclass[2021]{30G35, 42A38}

\begin{abstract}
In this paper we consider the kernel of the radially deformed Fourier transform introduced in the context of Clifford analysis in \cite{DBOSS-2013}. By adapting the Laplace transform method from \cite{DDBP-2018}, we obtain the Laplace domain expressions of the kernel for the cases of $m=2$ and $m > 2$ when $1+c=\frac{1}{n}, n\in \mathbb{N}_0\backslash\{1\}$ with $n$ odd. Moreover, we show that the expressions can be simplified using the Poisson kernel and the generating function of the Gegenbauer polynomials. As a consequence, the inverse formulas are used to get the integral expressions of the kernel in terms of Mittag-Leffler functions.
\end{abstract}
	
\maketitle
	

\section{Introduction}
Clifford analysis \cite{FRF, CASF, GM-1991} is a refinement of harmonic analysis in the $m$-dimensional Euclidean space $\mathbb{R}^m$. The orthogonal Clifford algebra $\mathcal{C}\ell_{0, m}$ over $\mathbb{R}^m$ is generated by the canonical basis $e_i$, $i=1,\dots , m$. These generators $e_i$ satisfy the multiplication rules $e_ie_j+e_je_i = -2\delta_{ij},\ 1\le i,\,j\le m$. The $\mf{sl}_2$ algebra generated by the basic operators in  harmonic analysis is refined to the Lie superalgebra $\mf{osp}(1|2)$. This is achieved by introducing the Dirac operator $\upx = \sum_{j=1}^{m} \partial_{x_{j}} e_{j}$ and its Fourier symbol $\ux=\sum_{i=1}^{m}e_ix_i$. These operators satisfy $\ux^2=-\langle x,x \rangle = -\sum_{j=1}^mx_j^2=-|x|^2$ and $\upx^2 = -\Delta$.

The radially deformed Fourier transform
\begin{equation}\label{rddofour}
\cF_{D} = e^{i\frac{\pi}{2}\left( \frac{1}{2}+\frac{m-1}{2(1+c)}\right)}e^{\frac{-i\pi}{4(1+c)^2}\left({\bf D}^2 - (1+c)^2\ux^2\right)}
\end{equation}
where $L={\bf D}^2-(1+c)^2\ux^2$ is the generalized Hamiltonian, was initially introduced as the Fourier transform associated with the so-called radially deformed Dirac operator
\[
{\bf D}:=\partial_{\ux}+c|x|^{-2}\ux\,\mathbb{E},\quad c>-1
\]
where $\mathbb{E}=\sum_{i=1}^{m}x_{i}\partial_{x_{i}}$ the Euler operator and we have put $a = 2, b = 0$ in  the operator studied in \cite{DBOSS-2012}. The investigation of \eqref{rddofour} was continued in \cite{DBOSS-2013} by a group theoretical approach. A series expansion for the kernel of the integral representation of the deformed Fourier transform 
\begin{equation}\label{rdftker}
\cF_{D} \left( f \right) \left(y\right) = \frac{\Gamma{\left(\frac{m}{2}\right)}}{2 \pi^{m/2}} \, \int_{\mathbb{R}^m} K_{m}^c(x,y)\, f(x)\, h(r)\, \mathrm{d}x
\end{equation}
with $h(r)=r^{1-\frac{1+mc}{1+c}}$ was determined explicitly, and the recursion relations for the kernel were shown subsequently. Explicit formulas for the kernel of the Fourier transform were obtained when the dimension is even and $1+c=\frac{1}{n}, n\in \mathbb{N}_0\backslash\{1\}$ with $n$ odd in \cite{DBDSE-2017}, where $\mathbb{N}_0$ is the set of non-negative integers.

More recently, Laplace transform methods (see \cite{Const, DDBP-2018}) have started to play an important role in the study of the kernels of hypercomplex Fourier transforms \cite{FTCA, DBR1, DBR2}. In \cite{DDBP-2018}, the method was developed to obtain explicit expressions for the kernel of the $(\kappa,\,a)$-generalized Fourier transform for $\kappa =0$. This was obtained by the introduction of an auxiliary variable in the series expansion of the kernel and subsequently taking the Laplace transform. By means of the Poisson kernel, the Laplace domain expression of the kernel can be simplified. Finally, using the Prabhakar function, the explicit expressions of the kernel were given via Laplace inversions. 

In this paper, we consider the kernel of the radially deformed Fourier transform \eqref{rddofour} in both even and odd dimensions by adapting the Laplace transform method in \cite{DDBP-2018}, for the case of $1+c=\frac{1}{n}, n\in \mathbb{N}_0\backslash\{1\}$ with $n$ odd. By introducing an auxiliary variable $t$ in the kernel, we can take the Laplace transform in $t$. In dimension 2, we begin by considering the kernel shown in Section 5.3 in \cite{DBDSE-2017}. The Laplace transform of the kernel can be simplified using two relations for infinite sums of trigonometric functions. In dimension $m > 2$, the Laplace domain expression of kernel can be obtained by means of the Poisson kernel and the generating function of the Gegenbauer polynomial. Furthermore, these results allow us to derive the explicit expressions for the kernel in terms of the Mittag-Leffler functions when $m =0$ and $m >2$ respectively. The structure of the Laplace transform method is shown in the following figure, where we introduce the notation $\lambda = (m-2)/2$:

\begin{figure}[h]\centering
\begin{tikzpicture}
\node
[rectangle,draw, rounded corners=1.0ex
  ] (2) at(10.4,0)[draw, align=left] {Introduce an auxiliary variable $t$\\ in Bessel functions in the kernels \\$K^c_m(x,\,y)\propto J_{\nu}(z)$};
\node
[rectangle,draw, rounded corners=1.0ex
  ] (4) at(7.5,-2.5)[draw, align=left] {Take the Laplace transform\\ with respect to $t$ in the kernels\\ $\mathcal{L}\left(K^c_2(x,\,y,\,t)\right)\propto \mathcal{L}\left(J_{\nu}(t\,z)\right)$\\
  when $m=2\, (\lambda =0)$};
\node
[rectangle,draw, rounded corners=1.0ex
  ] (5) at(13.3,-2.5)[draw, align=left] {Take the Laplace transform of the\\kernels $\mathcal{L}\left(K^c_m(x,\,y,\,t)\right)\propto \mathcal{L}\left(J_{\nu}(t\,z)\right)$\\for $m>2\,(\lambda >0)$};
\node
[rectangle,draw, rounded corners=1.0ex
] (7) at(10.7,-5.5)[draw, align=left] {Use the generating\\ function of the\\ Gegenbauer polynomials\\ $\sum_{n=0}^{\infty}C_n^{\lambda}(x)\,t^n$ for $\lambda>0$};
\node
[rectangle,draw, rounded corners=1.0ex
] (8) at(14.6,-5.5)[draw, align=left] {Use the expansion of\\ the Poisson kernel in\\ terms of Gegenbauer\\ polynomials\\ $\sum_{k=0}^{\infty}\frac{k+\lambda}{\lambda}\,C_k^{\lambda}(\xi)\,t^k$\\ for $\lambda>0$};
\node
[rectangle,draw, rounded corners=1.0ex] (9) at(6.52,-5.5) [draw, align=left] {Simplify the results by\\ two relations for infinite\\ sums of trigonometric\\functions};
\node
[rectangle,draw, rounded corners=1.0ex] (12) at(11,-8)[draw, align=left] {Laplace inversion of the kernels in terms of special functions by setting $t=1$};
\draw[->] (2)  to [bend right = 15] (4);
\draw[->] (2)  to [bend left = 15] (5);
\draw[->] (5)  to [bend right = 15] (7);
\draw[->] (5)  to [bend left = 15] (8);
\draw[->] (4)  to [bend right = 10] (9);
\draw[->] (9)  to [bend right = 18] (12);
\draw[->] (7)  to [bend right = 15] (12);
\draw[->] (8)  to [bend left = 15] (12);
\draw [densely dashed] (4.4,-1.3) to (16.52,-1.3) to (16.52,-7.05) to (4.4,-7.05) to (4.4,-1.3) ;
\end{tikzpicture}
\begin{center}
\begin{minipage}{6.0in}
\begin{itemize}
\item
The notation $\propto$ means that the equations hold up to constant factors.
\item The expressions of the kernel in the Laplace domain are indicated by the\\ dashed box.
\end{itemize}
\end{minipage}
\end{center}
\end{figure}\label{conceptmap}
This paper is organized as follows. In Section \ref{preliminary} we introduce  basic facts on the radially deformed Fourier transform and some results necessary for the sequel. In Section \ref{KerofRDFTinDim2lapDom} we obtain the kernel of the Fourier transform in dimension 2 in the Laplace domain. In Section \ref{Generalevenker} we derive the Laplace transform of the kernel when $m> 2$. Finally, the integral expressions of the kernel in terms of the Mittag-Leffler functions are given in Section \ref{MINLeffexPofkernel}. Conclusions can be found at the end of this paper.
\section{Preliminaries}\label{preliminary}
In this section, we give a brief overview of the radially deformed Fourier transform, special functions and the Laplace transform.

In \cite{DBOSS-2013}, the kernel $K_{m}^c$ of the deformed Fourier transform \eqref{rdftker} is given by 
\begin{equation}\label{p29}
K_{m}^c = \frac{1}{2\lambda}z^{-\frac{\mu -2}{2}} A_{\lambda}+ \frac{1}{2}z^{-\frac{\mu -2}{2}}B_{\lambda}-z^{-\frac{\mu }{2}}(\ux \wedge \uy)\,C_{\lambda}
\end{equation}
with
\begin{align}\label{kernels}
\begin{split}
A_{\lambda} &= \sum_{k=0}^{+\infty} (k+\lambda)\,\left( \alpha_k\, {J}_{\frac{\gamma_k}{2}-1}(z)-i\, \alpha_{k-1}\, {J}_{\frac{\gamma_{k-1}}{2}}(z)\right)C_k^{\lambda}(w),\\
B_{\lambda} &=  \sum_{k=0}^{+\infty}\left( \alpha_k\, {J}_{\frac{\gamma_k}{2}-1}(z)+i\, \alpha_{k-1}\, {J}_{\frac{\gamma_{k-1}}{2}}(z)\right)C_k^{\lambda}(w),\\
C_{\lambda} &=  \sum_{k=1}^{+\infty} \left( \alpha_k\, {J}_{\frac{\gamma_k}{2}-1}(z)+i\, \alpha_{k-1}\, {J}_{\frac{\gamma_{k-1}}{2}}(z)\right)C_{k-1}^{\lambda+1}(w),
\end{split}
\end{align}
where $\ux \wedge \uy:=\sum_{j<k}e_je_k\,(x_j\,y_k-x_k\,y_j)$ is the wedge product of the vectors $\ux$ and $\uy, \, \uz = |x||y|, \, w = \frac{\langle x, \, y \rangle}{z}, \, \lambda = \frac{m-2}{2},\, \mu = 1+\frac{m-1}{1+c}, \, \alpha_k = e^{-\frac{i\pi k}{2(1+c)}}, \, \alpha_{-1}=0, \, \gamma_k= \frac{2}{1+c}\left(k+\frac{m-2}{2}\right)+\frac{c+2}{1+c}$ and $c>-1$. ${J}_{\nu}$ denotes the Bessel function and $C_k^{\lambda}$ the Gegenbauer polynomial.

In \cite{DBDSE-2017}, the authors obtained explicit formulas for the above kernel when the dimension is even and $1+c=\frac{1}{n}, n\in \mathbb{N}_0\backslash\{1\}$ with $n$ odd by adapting the method developed in \cite{HDeB-2013}.

When $m=2$, the kernel was given explicitly in the following theorem:
\begin{theorem}\label{DBDSEker2}\cite{DBDSE-2017}
If $m=2\,(\lambda =0)$ and $1+\frac{1}{c} = \frac{1}{n}, n\in \mathbb{N}_0\backslash\{1\}$ with $n$ odd, then the kernel of the deformed Fourier transform $\mathcal{F}_{D}= e^{i\frac{\pi}{2}\left( \frac{1}{2}+\frac{m-1}{2(1+c)}\right)}e^{\frac{-i\pi}{4(1+c)^2}\left({\bf D}^2 - (1+c)^2\ux^2\right)}$ takes the form
\begin{align*}
K_2^c &= \frac{1}{n}i^{\frac{n-1}{2}}\bigg\{ z^{\frac{1-n}{2}}\sum_{j=0}^{n-1}\cos{\left(\left(\frac{n-1}{2}\right)\left(\frac{\theta +2\pi j}{n}\right)\right)e^{-i z \cos{\left( \frac{\theta +2\pi j}{n}\right)}}} \\
&\quad + \frac{1}{\sin{\theta}}z^{-\frac{n+1}{2}}(\ux\wedge\uy)\sum_{j=0}^{n-1}\sin{\left(\left(\frac{n-1}{2}\right)\left(\frac{\theta +2\pi j}{n}\right)\right)e^{-i z \cos{ \left( \frac{\theta +2\pi j}{n}\right)}}}\bigg\}
\end{align*}
with $\theta = \arccos{w}, \, z=|x||y|, \, w = \frac{\langle x, y \rangle}{z}$.
\end{theorem}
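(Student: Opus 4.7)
The plan is to start from the series representation \eqref{p29}--\eqref{kernels} for $K_m^c$ and specialize it to $m=2$, $\lambda=0$, $1+c=1/n$ with $n$ odd; I would then collapse the three infinite Bessel series into finite sums via a discrete Fourier projection applied to the Jacobi--Anger expansion of a plane wave.

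First I would compute the Bessel orders and phases under this specialization. Direct substitution in the definitions below \eqref{kernels} gives
\[
\gamma_k=2nk+n+1,\qquad \tfrac{\gamma_k}{2}-1=nk+\tfrac{n-1}{2},\qquad \tfrac{\gamma_k}{2}=nk+\tfrac{n+1}{2},
\]
which are integers because $n$ is odd, together with $\alpha_k=e^{-i\pi n k/2}=(-i)^{nk}$. The two Bessel orders appearing in every term of \eqref{kernels} thus lie in two fixed arithmetic progressions modulo $n$.

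Second I would take the $\lambda\to 0^{+}$ limit in the Gegenbauer expansions. Since $C_k^0(w)=0$ for $k\ge 1$, the series $B_\lambda$ collapses to its $k=0$ contribution, but the prefactor $\frac{1}{2\lambda}$ in front of $A_\lambda$ regularizes the zero of $C_k^\lambda$ at $\lambda=0$ through the standard identity $\lim_{\lambda\to 0^{+}}\lambda^{-1}C_k^\lambda(w)=\tfrac{2}{k}T_k(w)$ for $k\ge 1$. Combining this with $C_{k-1}^{1}(w)=U_{k-1}(w)$ and putting $w=\cos\theta$, one converts $\frac{1}{2\lambda}A_\lambda+\tfrac12 B_\lambda$ into a Chebyshev series in $T_k(\cos\theta)=\cos(k\theta)$ and $C_0$ into a series in $U_{k-1}(\cos\theta)=\sin(k\theta)/\sin\theta$; this is the source of the $1/\sin\theta$ factor in front of the wedge term in the claim.

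Third I would collapse the resulting trigonometric-Bessel series via the $n$-th root of unity trick. Starting from
\[
e^{-iz\cos\phi}=\sum_{l\in\mathbb{Z}}(-i)^l J_l(z)\,e^{il\phi},
\]
set $\phi_j=(\theta+2\pi j)/n$ for $j=0,\ldots,n-1$. The orthogonality $\sum_{j=0}^{n-1}e^{2\pi i jl/n}$ equals $n$ if $n\mid l$ and $0$ otherwise, so multiplying by the weights $\cos\bigl(\tfrac{n-1}{2}\phi_j\bigr)$ or $\sin\bigl(\tfrac{n-1}{2}\phi_j\bigr)$ (viewed as linear combinations of $e^{\pm i(n-1)\phi_j/2}$) before summing in $j$ projects the Jacobi--Anger expansion onto precisely the residue classes $l\equiv\pm\tfrac{n-1}{2}\pmod n$, i.e.\ the two Bessel progressions identified in the first step. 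Combining these finite sums with the Chebyshev factors from the second step and the overall prefactors $\alpha_k=(-i)^{nk}$ and $i^{(n-1)/2}/n$ yields the claimed closed form; the cancellation between the two $\pm$ contributions is what assembles a single cosine weighting in the scalar sum and a single sine weighting in the wedge sum.

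The main obstacle is the phase and boundary bookkeeping: one has to combine $\alpha_k$, the index shift $k\mapsto k-1$ appearing in the second summand of each series, the regularizing limit $(k+\lambda)C_k^{\lambda}/\lambda\to 2T_k$ (together with its $k=0$ boundary value), and the $(-i)^l$ from Jacobi--Anger, and verify that the two projections $b=\pm(n-1)/2$ collapse coherently into exactly the $\cos$-weighted scalar sum and $\sin$-weighted wedge sum in the statement, with no spurious boundary terms and the correct global prefactor $\tfrac{1}{n}i^{(n-1)/2}$. Absolute convergence, which legitimizes interchanging the $k$-summation with the root-of-unity average, follows from the standard $\Gamma(\nu+1)^{-1}(z/2)^\nu$ decay of $J_\nu(z)$ in the order $\nu$.
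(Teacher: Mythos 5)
This theorem is not proved in the paper at all: it is imported verbatim from \cite{DBDSE-2017} (note in passing that the hypothesis ``$1+\frac{1}{c}=\frac{1}{n}$'' in the statement is a typo for $1+c=\frac{1}{n}$, which is what you correctly use). So there is no in-paper proof to compare against; what the paper does record, at the start of Section \ref{KerofRDFTinDim2lapDom}, are exactly the intermediate $\lambda\to0$ limits of $A_\lambda$, $B_\lambda$, $C_\lambda$ that your second step produces, and I have checked that your Chebyshev limits ($\lambda^{-1}C_k^{\lambda}\to \frac{2}{k}T_k$, $C_k^{0}=\delta_{k0}$ up to the $k=0$ term, $C_{k-1}^{1}=U_{k-1}$), together with $\alpha_k=(-i)^{nk}$ and $\gamma_k=2nk+n+1$, reproduce those formulas. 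Your third step (Jacobi--Anger with $(-i)^{l}$ plus the $n$-th-root-of-unity filter at $\phi_j=(\theta+2\pi j)/n$ with weights $e^{\pm i\tilde n\phi_j}$) is also correct: the projection lands on $l\equiv\mp\tilde n\pmod n$, and after folding the negative-index Bessel terms via $J_{-\nu}=(-1)^{\nu}J_{\nu}$ the two classes merge into the one-sided $J_{nk\pm\tilde n}$ series with precisely the cosine/sine weights and the prefactor $\frac{1}{n}i^{\tilde n}$ of the statement; I verified the scalar and bivector bookkeeping and it closes. This is, in essence, the route of \cite{HDeB-2013} that \cite{DBDSE-2017} says it adapts, so you have reconstructed the cited source's argument rather than anything in this paper. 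It is worth noting that the present paper offers a genuinely different derivation of the same closed form as a byproduct of its main construction: Theorem \ref{Ker2coroll} factorizes $\mathcal{L}(K_2^c)$ into a proper rational function with simple poles at $s=-iz\cos\left(\frac{\theta+2\pi l}{n}\right)$, and the partial-fraction inversion in the remark following it returns a finite sum of exponentials $e^{-izt\cos\left(\frac{\theta+2\pi l}{n}\right)}$ which at $t=1$ recovers Theorem \ref{DBDSEker2}; your direct resummation avoids the Laplace machinery entirely, at the price of the phase bookkeeping you flag, which is real but routine.
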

The kernel in even dimensions $m>2$ was derived as iterated derivatives of Theorem \ref{DBDSEker2} using the recursion relations obtained in \cite{DBOSS-2013}.
These iterated derivatives of the kernel can in principle be computed recursively. 

For $n\in\mathbb{N}$ and $\lambda >-1/2$, the Gegenbauer polynomials are defined as 
\[
C_n^{\lambda}(w)=\sum_{j=0}^{\left\lfloor  \frac{n}{2} \right\rfloor}(-1)^j\frac{\Gamma (n-j-\lambda)}{\Gamma (\lambda)\,j!\,(n-2j)!}\,(2w)^{n-2j}.
\]
Next, we need the following expansion of the Poisson kernel for the unit ball in terms of Gegenbauer polynomials.
\begin{theorem} \cite{DX-2014}\label{Poistheo}
For $x, y\in\mathbb{R}^m$ and $|y|\le |x|=1$, the Poisson kernel for the unit ball is 
\begin{align*}
P(x,y)&=\frac{1-|y|^2}{|x-y|^m}\\
&=\frac{1-|y|^2}{\left(1-2\xi |y|+|y|^2\right)^{m/2}}\\
&=\sum_{j=0}^{\infty}\frac{j+m/2-1}{m/2-1}\,C_j^{m/2-1}(\xi)\,|y|^j, \quad \xi=\langle x, \frac{y}{|y|}\rangle.
\end{align*}
This result can be extended for $\lambda >0$, we have
\begin{align}\label{analy1}
\frac{1-|y|^2}{\left(1-2\xi |y|+|y|^2\right)^{\lambda +1}}=\sum_{j=0}^{\infty}\frac{j+
\lambda}{\lambda}\,C_j^{\lambda}(\xi)\,|y|^j.
\end{align}
It is still valid for $z\in \mathbb{C}, \,|z|<1$ and $|\xi|<1,$ (see \cite{Hyper})
\begin{align}\label{analy2}
\frac{1-z^2}{\left(1-2\xi z+z^2\right)^{\lambda +1}}=\sum_{j=0}^{\infty}\frac{j+
\lambda}{\lambda}\,C_j^{\lambda}(\xi)\,z^j.
\end{align}
\end{theorem}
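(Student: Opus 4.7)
The plan is to derive all three displays of Theorem \ref{Poistheo} from the classical generating function for the Gegenbauer polynomials,
\[
\sum_{j=0}^{\infty} C_j^{\lambda}(\xi)\, t^j = (1-2\xi t+t^2)^{-\lambda}, \qquad |t|<1,\ |\xi|\le 1,\ \lambda>0,
\]
by applying the operator $t\,\frac{d}{dt}+\lambda$ to both sides. On the left this yields $\sum_{j=0}^{\infty}(j+\lambda)\,C_j^{\lambda}(\xi)\,t^j$; on the right, direct differentiation gives
\[
\frac{2\lambda t(\xi-t)+\lambda(1-2\xi t+t^2)}{(1-2\xi t+t^2)^{\lambda+1}},
\]
whose numerator collapses to $\lambda(1-t^2)$. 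Dividing through by $\lambda$ produces identity \eqref{analy1}.

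For the first displayed formula, I would specialize $\lambda=\frac{m}{2}-1$ with $m\ge 3$, set $t=|y|\le 1$ and $\xi=\la x,y/|y|\ra$, and use $|x|=1$ to rewrite $1-2\xi|y|+|y|^2$ as $|x-y|^2$. This recovers the classical Poisson-kernel expansion on the unit ball.

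For the extension \eqref{analy2} to complex $z$ with $|z|<1$, I would invoke analytic continuation: for fixed real $\xi\in(-1,1)$, both sides are analytic functions of $z$ on the open unit disc, since the series $\sum_j \frac{j+\lambda}{\lambda}\,C_j^{\lambda}(\xi)\,z^j$ converges absolutely on compact subdiscs by the classical bound $|C_j^{\lambda}(\xi)|\le C_j^{\lambda}(1)=\binom{j+2\lambda-1}{j}$, which is polynomial in $j$, while the rational function on the right has its only singularities at $z=\xi\pm i\sqrt{1-\xi^2}$, both of which lie on the unit circle. Agreement on $(-1,1)$ then propagates to the whole disc by the identity principle. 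The only mildly delicate step is the uniform control of the Gegenbauer coefficients on compact subdiscs, but this follows from the standard $[-1,1]$ maximum-value estimate, so no substantial obstacle is expected; the identity is ultimately an algebraic rearrangement of the generating function combined with a soft analyticity argument.
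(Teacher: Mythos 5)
Your derivation is correct. Note, however, that the paper offers no proof of this statement at all: it is quoted verbatim from the book of Dunkl--Xu \cite{DX-2014} together with the NIST handbook \cite{Hyper}, and Remark \ref{Poissoncontinuation} delegates the analytic continuation from \eqref{analy1} to \eqref{analy2} to \cite{DDBP-2018}. So your write-up is a genuinely self-contained substitute rather than a variant of an argument in the paper. The core step --- applying $t\,\frac{d}{dt}+\lambda$ to the generating function \eqref{GeneFuncGegen} and observing that the numerator collapses to $\lambda(1-t^2)$ --- is the standard and cleanest route to \eqref{analy1}, and specializing $\lambda=m/2-1$, $t=|y|$, $\xi=\langle x,y/|y|\rangle$ with $|x|=1$ correctly recovers the geometric form $|x-y|^m$ of the denominator (for $m>2$, which is the only case the paper uses, since $\lambda>0$ throughout). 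The continuation argument is also sound: the bound $|C_j^{\lambda}(\xi)|\le C_j^{\lambda}(1)$, valid for $\lambda>0$ and $\xi\in[-1,1]$, gives polynomial growth of the coefficients and hence radius of convergence at least $1$, while the zeros of $1-2\xi z+z^2$ sit at $e^{\pm i\arccos\xi}$ on the unit circle, so both sides are analytic on the open disk and the identity principle applies. The only point worth making explicit is the choice of branch of $(1-2\xi z+z^2)^{\lambda+1}$ for non-integer $\lambda$: since the quadratic is nonvanishing on the simply connected disk and equals $1$ at $z=0$, the principal branch is single-valued there, which is what both the series and the real identity on $(-1,1)$ implicitly select. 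With that one sentence added, your proof is complete and could replace the external citations.
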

\begin{remark}\label{Poissoncontinuation}
The validity of the analytic continuation of \eqref{analy1} to \eqref{analy2} for the whole unit disk has been proved in \cite{DDBP-2018}.
\end{remark}
The generating function of the Gegenbauer polynomials (see e.g. \cite{EMOT-1953}, p.177 (29)) is given by
\begin{equation}\label{GeneFuncGegen}
\frac{1}{(1-2xt+t^2)^{\alpha}}=\sum_{n=0}^{\infty}C_n^{(\alpha)}(x)\,t^n 
\end{equation}
for $0\le |x|<1,\, |t|\le1,\, \alpha >0$.

Suppose that $f$ is a real or complex valued function of the variable $t>0$ and $s$ is a complex parameter. The (one-sided) Laplace transform of $f$ is defined as (see e.g. \cite{Laplace}) 
\begin{equation}\label{10}
F(s)=\mathcal{L}\left(f(t)\right)=\int_0^\infty{e^{-st}f(t)\,\mathrm{d}t}.
\end{equation}
By Lerch's theorem, if we restrict to continuous functions on $[0, \infty)$, the inverse transform is uniquely determined
$\mathcal{L}^{-1}\left(F(s)\right)=f(t).$

In the paper, we will need the following Laplace transform formula from \cite{AE-1954}:
\begin{equation}\label{exponlapfor}
\mathcal{L}(e^{-\alpha t})=\frac{1}{s+\alpha}, \quad {\rm Re}\,s >{\rm Re}\,\alpha.
\end{equation}
The Bessel function of the first kind is given by the infinite power series
\begin{equation}\label{p2}
J_\nu(x) = \sum_{n=0}^{\infty} \frac{(-1)^n}{n!\ \Gamma (n+\nu+1)} \left ( \frac{x}{2} \right )^{2n+\nu}.
\end{equation}
For ${\rm Re}\,s>|{\rm Im}\, b|$, we have the Laplace transform of \eqref{p2}
\begin{equation}\label{LapBessfor}
\mathcal{L}\left({J}_{\nu}(b\,t)\right)=\frac{1}{\sqrt{s^2+b^2}}\left(\frac{b}{s+\sqrt{s^2+b^2}}\right)^{\nu},\quad {\rm Re}\,\nu>-1.
\end{equation}

We further introduce definitions of two Mittag-Leffler functions in \cite{MH-2008, GKMR-2014}.
\begin{definition}
\label{defML}
The two-parametric Mittag-Leffler function is defined by
\begin{equation}\label{parbmitlef}
E_{\alpha, \beta}(z):=\sum_{n=0}^{\infty}\frac{z^n}{\Gamma(\alpha n +\beta)}, \quad \alpha, \beta \in \mathbb{C}, {\rm Re}\,\alpha >0, {\rm Re}\,\beta >0.
\end{equation}
\end{definition}
\begin{definition}
\label{defPML}
The Prabhakar generalized Mittag-Leffler function is
 \begin{equation}\label{PRABmittleff}
E^{\delta}_{\alpha, \beta}(z):=\sum^{\infty}_{n=0}\frac{(\delta)_n\, z^n}{\Gamma(\alpha n+\beta)\,n!},
\end{equation}
where $\alpha, \beta, \delta\in\mathbb{C}$ with ${\rm Re}\,\alpha >0$. 
\end{definition}
\begin{remark}\label{remark2variML}
When $\delta =1$, we have $E^{1}_{\alpha, \beta}(z)=E_{\alpha, \beta}(z)$ (see e.g. \cite{MH-2008}, (2.3.3)).
\end{remark}
The Laplace transform of the 
Prabhakar function \eqref{PRABmittleff} is
\begin{equation}\label{LaPPrabhaMLe}
\mathcal{L}\left(t^{\beta -1}E^{\delta}_{\alpha, \beta} (b\,t^{\alpha})\right)=\frac{1}{s^{\beta}}\frac{1}{(1-b\,s^{-\alpha})^{\delta}},
\end{equation}
where ${\rm Re}\,\alpha >0, {\rm Re}\,\beta>0, {\rm Re}\,s>0$ and $s>|b|^{1/({\rm Re}\,\alpha)}$, see \cite{MH-2008}.\\
We will also need the following inverse transform in \cite{PBM-1992}, for ${\rm Re}\,\nu>-1,\  {\rm Re}\,s>|{\rm Im}\, a|$,  
\begin{equation}\label{MLinvers}
\begin{aligned}
&\quad\mathcal{L}^{-1}\left(\frac{\left(\sqrt{s^2+a^2}-s\right)^{\nu}}{\sqrt{s^2+a^2}}F\left(\sqrt{s^2+a^2}-s\right)\right)\\
&=(a^{2}\,t)^{\nu /2}\int_{0}^{t}(t+2\tau)^{-\nu/2}J_{\nu}(a\sqrt{t^2+2\tau t})\,f(\tau)\,\mathrm{d}\tau,
\end{aligned}
\end{equation}
where $\mathcal{L}(f(t))=F(s)$.
We also need 
the convolution formula of the Laplace transform. Denoting $\mathcal{L}(g(t))=G(s)$ and $\mathcal{L}(f(t))=F(s)$, we have 
\begin{equation}\label{convoluLap}
\mathcal{L}^{-1}\left(F(s)\,G(s)\right)=\int_0^t f(\tau)\,g(t-\tau)\,\mathrm{d}\tau.
\end{equation}
\section{The kernel in the Laplace domain}
\subsection{The dimension $m=2$}\label{KerofRDFTinDim2lapDom}
In \cite{DBDSE-2017}, when $m=2$, i.e. $\lambda =0$, the kernel of the deformed Fourier transform in case of $1+c=\frac{1}{n}, n\in\mathbb{N}_0\backslash \{1\}$ with $n$ odd and $n=2\tilde{n}+1, \tilde{n}\in\mathbb{N}_0$ takes the form
\begin{align*}
K_{2}^c &= \frac{1}{2}z^{\frac{c}{2(1+c)}}\lim_{\lambda\to 0}\frac{1}{\lambda}A_{\lambda}+ \frac{1}{2}z^{\frac{c}{2(1+c)}}\lim_{\lambda\to 0}B_{\lambda}-z^{-\frac{2+c}{2(1+c)}}(\ux \wedge \uy)\,\lim_{\lambda\to 0}C_{\lambda}
\end{align*}
with
\begin{align*}
\lim_{\lambda\to 0}\frac{1}{\lambda}A_{\lambda}&=-J_{\tilde{n}}(z)
+2\sum_{k=0}^{+\infty}(-i)^{kn}J_{kn+\tilde{n}}(z)\cos{(k\theta)}\\
&\quad +2(-1)^{\tilde{n}}\sum_{k=1}^{+\infty}(-i)^{kn}J_{kn-\tilde{n}}(z)\cos{(k\theta)}, \quad w =\cos{\theta};\\
\lim_{\lambda\to 0}B_{\lambda}&=J_{\tilde{n}}(z);\\
\lim_{\lambda\to 0}C_{\lambda}&=\frac{1}{\sin{\theta}}\sum_{k=1}^{+\infty}(-i)^{kn}J_{kn+\tilde{n}}(z)\sin{(k\theta)}\\
&\quad -(-1)^{\tilde{n}}\frac{1}{\sin{\theta}}\sum_{k=1}^{+\infty}(-i)^{kn}J_{kn-\tilde{n}}(z)\sin{(k\theta)}, \quad w =\cos{\theta}.
\end{align*}

Inspired by the Laplace transform method in \cite{DDBP-2018}, we introduce an auxiliary variable $t$ in the Bessel functions of the kernel.
For the scalar part of $K_2^c$, we have 
\begin{equation}\label{tinscaker}
\begin{aligned}
K_{2,scal}^c
&= \frac{1}{2}z^{\frac{c}{2(1+c)}}\left(\lim_{\lambda\to 0}\frac{1}{\lambda}A_{\lambda}+\lim_{\lambda\to 0}B_{\lambda}\right)\\
&=z^{-\tilde{n}}\sum_{k=0}^{+\infty}(-i)^{kn}J_{kn+\tilde{n}}(z\, t)\cos{(k\theta)}\\
&\quad +(-1)^{\tilde{n}}z^{-\tilde{n}}\sum_{k=1}^{+\infty}(-i)^{kn}J_{kn-\tilde{n}}(z\, t)\cos{(k\theta)},
\end{aligned}
\end{equation}
and for the bivector part 
\begin{equation}\label{tinbivker}
\begin{aligned}
 K_{2,biv}^c
 &=-z^{-\frac{2+c}{2(1+c)}}\lim_{\lambda\to 0}C_{\lambda}\\
 &=-\frac{1}{\sin{\theta}}z^{-\tilde{n}-1}\sum_{k=1}^{+\infty}(-i)^{kn}J_{kn+\tilde{n}}(z\, t)\sin{(k\theta)}\\
 &\quad +(-1)^{\tilde{n}}\frac{1}{\sin{\theta}}z^{-\tilde{n}-1}\sum_{k=1}^{+\infty}(-i)^{kn}J_{kn-\tilde{n}}(z\, t)\sin{(k\theta)}.   
\end{aligned}
\end{equation}
Next, let us first consider the scalar part of the kernel. For ${\rm Re}\,s$ big enough, we take the Laplace transform of the scalar part with respect to $t$ by formula \eqref{LapBessfor}, which yields
\begin{align*}
\mathcal{L}\left(K_{2,scal}^c\right)
&=\frac{1}{r}\left(\frac{1}{s+r}\right)^{\tilde{n}}\sum_{k=0}^{+\infty}(-i)^{kn}\left(\frac{z}{s+r}\right)^{kn}\cos{(k\theta)}\\
&\quad + (-1)^{\tilde{n}}z^{-2\tilde{n}}\frac{1}{r}\left(\frac{1}{s+r}\right)^{-\tilde{n}}\sum_{k=1}^{+\infty}(-i)^{kn}\left(\frac{z}{s+r}\right)^{kn}\cos{(k\theta)}\\
&=\frac{1}{r}\left(\frac{1}{s+r}\right)^{\tilde{n}}\sum_{k=0}^{+\infty}(-i)^{kn}\left(\frac{z}{s+r}\right)^{kn}\cos{(k\theta)}\\
&\quad + (-1)^{\tilde{n}}z^{-2\tilde{n}}\frac{1}{r}\left(\frac{1}{s+r}\right)^{-\tilde{n}}\sum_{k=0}^{+\infty}(-i)^{kn}\left(\frac{z}{s+r}\right)^{kn}\cos{(k\theta)}\\
&\quad - (-1)^{\tilde{n}}z^{-2\tilde{n}}\frac{1}{r}\left(\frac{1}{s+r}\right)^{-\tilde{n}}
\end{align*}
with $r=\sqrt{s^2+z^2}$, $\uz = |x||y|$, $w = \frac{\langle x, \, y \rangle}{z}$. The validity of transforming term by term in \eqref{tinscaker} is guaranteed by the following theorem.
\begin{theorem}\label{laptranguarantee}\cite{GD-2012}
Let the function $F(s)$ be represented by a series of $\mathcal{L}$-transforms 
\[
F(s)=\sum_{v=0}^{\infty}F_v(s), \quad F_v(s)=\mathcal{L}(f_v(t)),
\]
where all integrals
\[
\mathcal{L}(f_v)=\int_0^{\infty}e^{-st}\,f_v(t)\,\mathrm{d}t =F_v(s), \quad (v=0,1,\cdots)
\]
converge in a common half-plane ${\rm Re}\,s \ge x_0$. Moreover, we require that the integrals
\[
\mathcal{L}(|f_v|)=\int_{0}^{\infty}e^{-st}|f_v(t)|\,\mathrm{d}t=G_v, \quad (v=0,1,\cdots)
\]
and the series
\[
\sum_{v=0}^{\infty} G_v(x_0)
\]
converge which implies that $\sum_{v=0}^{\infty} F_v(s)$ converges absolutely and uniformly in ${\rm Re}\,s \ge x_0$. Then $\sum_{v=0}^{\infty} f_v(t)$ converges, absolutely, towards a function $f(t)$ for all $t\ge 0$; this $f(t)$ is the original function of $F(s)$;
\[
\mathcal{L}\left(\sum_{v=0}^{\infty} f_v(t)\right)=\sum_{v=0}^{\infty} F_v(s).
\]
\end{theorem}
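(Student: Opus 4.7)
The plan is to establish the statement in two stages: first that the putative original $f(t):=\sum_{v=0}^\infty f_v(t)$ is absolutely convergent for every $t\ge 0$, and then that its Laplace transform equals $F(s)$. The underlying principle is classical: the hypothesis $\sum_v G_v(x_0)<\infty$ is precisely the Tonelli condition that lets one interchange a sum with a Laplace integral, and once that is secured a dominated-convergence argument delivers the identity $\mathcal{L}(f)=F$.

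First I would set up the Fubini/Tonelli picture. The hypothesis reads $\sum_v \int_0^{\infty} e^{-x_0 t}|f_v(t)|\,\mathrm{d}t=\sum_v G_v(x_0)<\infty$, which is exactly the Tonelli assumption for the nonnegative measurable function $(v,t)\mapsto e^{-x_0 t}|f_v(t)|$ on $\mathbb{N}\times[0,\infty)$. Interchanging sum and integral produces $\int_0^{\infty} e^{-x_0 t}\bigl(\sum_v|f_v(t)|\bigr)\,\mathrm{d}t<\infty$, whence $\sum_v|f_v(t)|<\infty$ for almost every $t$; combined with the standing assumption that each $f_v$ is a continuous original of $F_v$ (in the sense of Lerch), this defines $f(t)$ pointwise and proves absolute convergence of the defining series. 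The absolute and uniform convergence of $\sum F_v(s)$ on ${\rm Re}\,s\ge x_0$ then follows at once from the Weierstrass $M$-test via the bound $|F_v(s)|\le G_v(x_0)$.

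Next I would establish the Laplace identity by dominated convergence applied to the partial sums $S_N(t):=\sum_{v=0}^N f_v(t)$. These converge pointwise to $f(t)$ and, for ${\rm Re}\,s\ge x_0$, satisfy the uniform domination
\[
\bigl|e^{-st}S_N(t)\bigr|\le e^{-x_0 t}\sum_{v=0}^{\infty}|f_v(t)|,
\]
whose right-hand side is integrable by the Tonelli step. Passing the limit inside $\int_0^{\infty} e^{-st}(\cdot)\,\mathrm{d}t$ yields $\mathcal{L}(f)(s)=\lim_N\sum_{v=0}^{N} F_v(s)=F(s)$, which is the desired identity.

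The main, and essentially only, obstacle is reconciling the ``for all $t\ge 0$'' formulation with the a.e.\ statement produced by Tonelli. In the classical textbook setting this is handled by the tacit assumption that the $f_v$ are continuous originals of their Laplace transforms, so that pointwise absolute convergence on a full-measure set extends to the whole half-line by continuity. In the applications made in the present paper each $f_v$ will be an elementary smooth function of $t$, so no extra argument is needed.
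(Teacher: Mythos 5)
The paper does not prove this statement; it is imported verbatim (as Theorem~30.1) from Doetsch's book \cite{GD-2012}, so there is no internal proof to compare against. Your Tonelli--plus--dominated-convergence argument is correct and is essentially the classical proof Doetsch himself gives (term-by-term integration of the positive series $\sum_v e^{-x_0 t}|f_v(t)|$, then interchange of sum and integral justified by absolute integrability). The one subtlety you rightly flag --- Tonelli only yields absolute convergence of $\sum_v f_v(t)$ for almost every $t$, not for all $t\ge 0$ --- is real: Doetsch's original conclusion is stated ``for almost all $t$,'' and the ``for all $t$'' in the quoted version is harmless here only because, as you note, the $f_v$ arising in this paper are continuous (Bessel-function) originals.
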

Using the well-known relation (see for e.g. \cite{SLL5-2009}, p. 136 (21.41)):
\begin{equation}\label{sumprocuctCOS}
\sum_{k=0}^{\infty}r^k \cos{kx}=\frac{1-r\cos{x}}{1-2r\cos{x}+r^2},\quad |r|<1,
\end{equation}
we obtain the following expression for the Laplace transform of the scalar part of $K_2^c$
\begin{equation}\label{scakersr}
\begin{aligned}
\mathcal{L}\left(K_{2,scal}^c\right)
&=\frac{1}{r}\left(\frac{1}{s+r}\right)^{\tilde{n}}\frac{1-w u_R}{1-2w u_R+u_R^2}\\
&\quad +(-1)^{\tilde{n}}z^{-2\tilde{n}}\frac{1}{r}\left(\frac{1}{s+r}\right)^{-\tilde{n}}\frac{1-w u_R}{1-2w u_R+u_R^2}\\
&\quad - (-1)^{\tilde{n}}z^{-2\tilde{n}}\frac{1}{r}\left(\frac{1}{s+r}\right)^{-\tilde{n}}\\
&=\frac{1}{r}\left(\frac{1}{s+r}\right)^{\tilde{n}}\frac{1}{1-2w u_R+u_R^2}\\
&\quad -\frac{1}{r}\left(\frac{1}{s+r}\right)^{\tilde{n}}\frac{w u_R}{1-2w u_R+u_R^2}\\
&\quad +(-1)^{\tilde{n}}z^{-2\tilde{n}}\frac{1}{r}\left(\frac{1}{s+r}\right)^{-\tilde{n}}\frac{w u_R}{1-2w u_R+u_R^2}\\
&\quad -(-1)^{\tilde{n}}z^{-2\tilde{n}}\frac{1}{r}\left(\frac{1}{s+r}\right)^{-\tilde{n}}\frac{u_R^2}{1-2w u_R+u_R^2}
\end{aligned}
\end{equation}
where $u_R=\left(\frac{-iz}{s+r}\right)^n$.
Simplifying the results yields
\begin{align*}
\mathcal{L}\left(K_{2,scal}^c\right)
&=\frac{1}{r}\frac{(s+r)^{n-\tilde{n}}+(-1)^{\tilde{n}}(r-s)^{n-\tilde{n}}}
{{(s+r)^n-2w (-iz)^n+(-1)^{n}(r-s)^n}}\\
&\quad -w (-i)^n\frac{z}{r}\frac{(r-s)^{\tilde{n}}-(-1)^{n}(s+r)^{\tilde{n}}}
{{(s+r)^n-2w (-iz)^n+(-1)^{n}(r-s)^n}}.
\end{align*}
Similarly, we can compute the Laplace transform of the bivector part
\begin{align*}
\mathcal{L}\left(K_{2,biv}^c\right)
&=-\frac{1}{\sin{\theta}}z^{-1}\frac{1}{r}\left(\frac{1}{s+r}\right)^{\tilde{n}}\sum_{k=1}^{+\infty}(-i)^{kn}\left(\frac{z}{s+r}\right)^{kn}\sin{(k\theta)}\\
 &\quad +(-1)^{\tilde{n}}\frac{1}{\sin{\theta}}z^{-2\tilde{n}-1}\frac{1}{r}\left(\frac{1}{s+r}\right)^{-\tilde{n}}\sum_{k=0}^{+\infty}(-i)^{kn}\left(\frac{z}{s+r}\right)^{kn}\sin{(k\theta)}.
\end{align*}
By the relation (see \cite{SLL5-2009}, p. 136 (21.42))
\begin{equation}\label{sumprocuctSIN}
\sum_{k=1}^{\infty}r^k \sin{kx}=\frac{r\sin{x}}{1-2r\cos{x}+r^2},\quad |r|<1,
\end{equation}
it follows that
\begin{equation}\label{bivkersr}
\begin{aligned}
\mathcal{L}\left(K_{2,biv}^c\right)
&=-\frac{1}{zr}\left(\frac{1}{s+r}\right)^{\tilde{n}}\frac{u_R}{1-2w u_R+u_R^2}\\
&\quad +(-1)^{\tilde{n}}z^{-2\tilde{n}}\frac{1}{zr}\left(\frac{1}{s+r}\right)^{-\tilde{n}}\frac{u_R}{1-2w u_R+u_R^2}
\end{aligned}
\end{equation}
where $u_R=\left(\frac{-iz}{s+r}\right)^n$. \\
By direct computation, we obtain
\begin{align*}
\mathcal{L}\left(K_{2,biv}^c\right)
&=-(-i)^{n}\frac{1}{r}\frac{(r-s)^{\tilde{n}}-(-1)^{n}(s+r)^{\tilde{n}}}{(s+r)^n-2w (-iz)^n+(-1)^{n}(r-s)^n}\\
&=\frac{i}{r}\frac{(-1)^{\tilde{n}}(r-s)^{\tilde{n}}-(s+r)^{\tilde{n}}}
{{(s+r)^n-2w (-iz)^n+(-1)^n(r-s)^n}},
\end{align*}
since 
\begin{equation}\label{srrsz2}
(s+r)^{\tilde{n}}(r-s)^{\tilde{n}}
=(r^2-s^2)^{\tilde{n}}
=\left(\left(\sqrt{s^2+z^2}\right)^2-s^2\right)^{\tilde{n}}
=z^{2\tilde{n}}.
\end{equation}
Hence we can summarize the results in the following theorem.
\begin{theorem}\label{kerneldim2}
If $m=2$ $(\lambda =0)$, $1+c=\frac{1}{n}$, $n\in \mathbb{N}_0\backslash\ \{1\}$ with $n$ odd, i.e. $n=2\tilde{n}+1$, $\tilde{n}\in\mathbb{N}_0$, then the kernel of the deformed Fourier transform $\cF_{D} =   e^{i\frac{\pi}{2}\left( \frac{1}{2}+\frac{m-1}{2(1+c)}\right)}e^{\frac{-i\pi}{4(1+c)^2}\left({\bf D}^2 - (1+c)^2\ux^2\right)}$ in the Laplace domain is given by 
\begin{align*}
\mathcal{L}\left(K_{2}^c\right)&=\frac{1}{r}\frac{(s+r)^{\tilde{n}+1}+(-1)^{\tilde{n}}(r-s)^{\tilde{n}+1}}
{{(s+r)^n-2w (-iz)^n+(-1)^{n}(r-s)^n}}\\
&\quad +w z\frac{i}{r}\frac{(-1)^{\tilde{n}}(r-s)^{\tilde{n}}-(s+r)^{\tilde{n}}}
{{(s+r)^n-2w (-iz)^n+(-1)^{n}(r-s)^n}}\\
&\quad +(\ux \wedge \uy)\,\frac{i}{r}\frac{(-1)^{\tilde{n}}(r-s)^{\tilde{n}}-(s+r)^{\tilde{n}}}
{{(s+r)^n-2w (-iz)^n+(-1)^n(r-s)^n}}
\end{align*}
where $r=\sqrt{s^2+z^2},\, z=|x||y|,\, w = \frac{\langle x, \, y \rangle}{z}$.
\end{theorem}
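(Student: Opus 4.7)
The plan is the following. Starting from the explicit series for the scalar and bivector parts of $K_2^c$ that open Section \ref{KerofRDFTinDim2lapDom}, I first introduce an auxiliary variable $t$ in each Bessel argument, i.e.\ replace $J_{kn\pm\tilde{n}}(z)$ by $J_{kn\pm\tilde{n}}(z\,t)$, which yields \eqref{tinscaker} and \eqref{tinbivker}. I then take the Laplace transform in $t$ term by term using \eqref{LapBessfor}. The interchange of sum and integral is justified by Theorem \ref{laptranguarantee}: on a half-plane $\mathrm{Re}\,s\ge x_0$ with $x_0$ large enough, the geometric decay of $\bigl(z/(s+r)\bigr)^{kn}$ in the transformed tails makes the dominating series $\sum_v G_v(x_0)$ converge.

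After the transform each series takes the shape $\sum_k u_R^{\,k}\cos(k\theta)$ or $\sum_k u_R^{\,k}\sin(k\theta)$ in the variable $u_R=\bigl(\tfrac{-iz}{s+r}\bigr)^n$, and I would close them with the classical identities \eqref{sumprocuctCOS} and \eqref{sumprocuctSIN}. One bookkeeping point needs care: the second scalar series starts at $k=1$, so I first extend it to $k=0$ and subtract the corresponding $k=0$ term before applying \eqref{sumprocuctCOS}. The outcome is exactly \eqref{scakersr} for the scalar part and \eqref{bivkersr} for the bivector part.

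The final step is algebraic. I clear the denominator $1-2w u_R + u_R^2$ by multiplying numerator and denominator by $(s+r)^n$; using $(s+r)(r-s)=z^2$ this produces the common denominator $(s+r)^n - 2w(-iz)^n + (-1)^n(r-s)^n$ displayed in the theorem. The identity $(s+r)^{\tilde{n}}(r-s)^{\tilde{n}} = z^{2\tilde{n}}$ from \eqref{srrsz2} then converts factors of the form $z^{-2\tilde{n}}(s+r)^{\tilde{n}}$ into $(r-s)^{-\tilde{n}}$, and this is what unifies the four scalar fractions of \eqref{scakersr} into the two scalar terms of the theorem. The bivector piece, already in the form \eqref{bivkersr}, contributes the third term once it is pre-multiplied by the $(\ux\wedge\uy)$ factor from \eqref{p29}.

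The main obstacle is pure bookkeeping: matching up the $k=0$ versus $k=1$ starting points, tracking four separate $(-1)^{\tilde{n}}$ and $(-i)^n$ sign factors across the scalar and bivector expressions, and recognising that \eqref{srrsz2} is what collapses the four-fraction scalar expression into a two-fraction one. Once the key identity is noticed, everything else is routine algebra.
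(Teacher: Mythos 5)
Your proposal is correct and follows essentially the same route as the paper: introduce the auxiliary variable $t$, transform term by term via \eqref{LapBessfor} with the interchange justified by Theorem \ref{laptranguarantee}, close the resulting geometric-trigonometric series with \eqref{sumprocuctCOS} and \eqref{sumprocuctSIN} (after shifting the second scalar sum to start at $k=0$), and collapse the four scalar fractions into two using $(s+r)(r-s)=z^2$ from \eqref{srrsz2}. All the key steps and the one delicate bookkeeping point are correctly identified.
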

By adapting the factorization for the polynomials utilized in Lemma 1 in \cite{DDBP-2018}, we rewrite Theorem \ref{kerneldim2} in the following theorem.
\begin{theorem}\label{Ker2coroll}
If the dimension $m=2\, (\lambda=0)$ and $1+c=\frac{1}{n}$, $n\in \mathbb{N}_0\backslash\ \{1\}$ odd, $n=2\tilde{n}+1$, $\tilde{n}\in\mathbb{N}_0$, the following expansions hold:\\
\emph{\underline{Case 1}: $\tilde{n}$ odd \, $(\tilde{n}=1,3,5,\dots)$} 
\begin{align*}
\mathcal{L}\left(K_{2}^c\right)&=\frac{1}{2^{\tilde{n}}}\frac{\prod_{l=0,\,l\ne\frac{\tilde{n}+1}{2}}^{\tilde{n}}\left(s-iz\sin{\left(\frac{l\pi}{\tilde{n}+1}\right)}\right)}
{{\prod_{l=0}^{n-1}\left(s+iz\cos{\left(\frac{\theta+2\pi l}{n}\right)}\right)}}\\
&\quad -\frac{i\,\omega\,z}{2^{\tilde{n}+1}}\frac{\prod_{l=1}^{\tilde{n}-1}\left(s-iz\cos{\left(\frac{l\pi}{\tilde{n}}\right)}\right)}
{{\prod_{l=0}^{n-1}\left(s+iz\cos{\left(\frac{\theta+2\pi l}{n}\right)}\right)}}\\
&\quad 
 -\frac{i}{2^{\tilde{n}+1}}(\ux \wedge \uy)\,\frac{\,\prod_{l=1}^{\tilde{n}-1}\left(s-iz\cos{\left(\frac{l\pi}{\tilde{n}}\right)}\right)}
{{\prod_{l=0}^{n-1}\left(s+i z\cos{\left(\frac{\theta+2\pi l}{n}\right)}\right)}};
\end{align*}
\emph{\underline{Case 2}: $\tilde{n}$ even \, $(\tilde{n}=0,2,4,\dots)$} 
\begin{align*}
\mathcal{L}\left(K_{2}^c\right)&=\frac{1}{2^{\tilde{n}}}\frac{\prod_{l=1}^{\tilde{n}}\left(s-iz\cos{\left(\frac{l\pi}{\tilde{n}+1}\right)}\right)}
{{\prod_{l=0}^{n-1}\left(s+iz\cos{\left(\frac{\theta+2\pi l}{n}\right)}\right)}}\\
&\quad -\frac{i\,\omega\,z}{2^{\tilde{n}+1}}\frac{\prod_{l=0,\,l\ne\frac{\tilde{n}}{2}}^{\tilde{n}-1}\left(s-iz\sin{\left(\frac{l\pi}{\tilde{n}}\right)}\right)}
{{\prod_{l=0}^{n-1}\left(s+iz\cos{\left(\frac{\theta+2\pi l}{n}\right)}\right)}}\\
&\quad 
 -\frac{i}{2^{\tilde{n}+1}}(\ux \wedge \uy)\,\frac{\prod_{l=0,\,l\ne\frac{\tilde{n}}{2}}^{\tilde{n}-1}\left(s-iz\sin{\left(\frac{l\pi}{\tilde{n}}\right)}\right)}
{{\prod_{l=0}^{n-1}\left(s+iz\cos{\left(\frac{\theta+2\pi l}{n}\right)}\right)}}
\end{align*}
where $\theta=\arccos{w},\, w = \frac{\langle x, \, y \rangle}{z},\, z=|x||y|$.
\end{theorem}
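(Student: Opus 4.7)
My plan is to factor the denominator $D(s) := (s+r)^n - 2w(-iz)^n + (-1)^n(r-s)^n$ and the two numerators
\[
N_1 := (s+r)^{\tilde{n}+1} + (-1)^{\tilde{n}}(r-s)^{\tilde{n}+1},\qquad N_2 := (-1)^{\tilde{n}}(r-s)^{\tilde{n}} - (s+r)^{\tilde{n}}
\]
appearing in Theorem~\ref{kerneldim2} into linear factors in $s$, following the strategy of Lemma~1 of \cite{DDBP-2018}. The central device is the parametric substitution $s = -iz\cos\phi$, under which (on the principal branch) $r = \sqrt{s^2+z^2} = z\sin\phi$ and
\[
s+r = -iz\,e^{i\phi},\qquad r-s = iz\,e^{-i\phi},
\]
so that any combination $(s+r)^k \pm (r-s)^k$ becomes an elementary trigonometric expression in $k\phi$, and factorisation in $s$ reduces to locating zeros of sines and cosines.

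Plugging this in and using $(iz)^n = -(-iz)^n$ (valid because $n$ is odd), the denominator collapses to $D = 2(-iz)^n[\cos(n\phi) - \cos\theta]$ with $\theta = \arccos w$. Its zeros are $\phi = (\theta+2\pi l)/n$ for $l=0,\ldots,n-1$, producing $n$ distinct roots $s = -iz\cos((\theta+2\pi l)/n)$; matching with the asymptotic $D(s)\sim(2s)^n$ as $s\to\infty$ fixes the leading constant and yields
\[
D(s) = 2^n\prod_{l=0}^{n-1}\Bigl(s + iz\cos\tfrac{\theta+2\pi l}{n}\Bigr),
\]
the common denominator appearing in both cases. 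An entirely analogous computation gives $N_1 = 2i(-iz)^{\tilde{n}+1}\sin((\tilde{n}+1)\phi)$ and $N_2 = -2i(-iz)^{\tilde{n}}\sin(\tilde{n}\phi)$. A direct binomial expansion confirms that $r$ divides both $N_j$ (in either parity of $\tilde{n}$), so $N_1/r$ and $N_2/r$ are honest polynomials in $s$ of degrees $\tilde{n}$ and $\tilde{n}-1$, with leading coefficients $2^{\tilde{n}+1}$ and $-2^{\tilde{n}}$ respectively. Their zeros, read off from $\sin((\tilde{n}+1)\phi)=0$ and $\sin(\tilde{n}\phi)=0$, are $s = -iz\cos(k\pi/(\tilde{n}+1))$, $k=1,\ldots,\tilde{n}$, and $s = -iz\cos(k\pi/\tilde{n})$, $k=1,\ldots,\tilde{n}-1$; the endpoints $\phi=0,\pi$ are dropped because they correspond only to the extracted factor $r = z\sin\phi$ (cf.\ identity \eqref{srrsz2}) and not to genuine roots of $N_j/r$.

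The last step is to bring these cosine-indexed products into the sine-indexed form of the statement. Applying $\cos(k\pi/(\tilde{n}+1)) = -\sin((k-(\tilde{n}+1)/2)\pi/(\tilde{n}+1))$, and the analogous identity for the $N_2$ zeros, converts each cosine root into a sine root and splits the argument naturally by parity: in Case~1 ($\tilde{n}$ odd) the middle index $(\tilde{n}+1)/2$ is an integer and must be singled out from the $N_1$ product, while in Case~2 ($\tilde{n}$ even) the index $\tilde{n}/2$ plays the analogous exceptional role for the $N_2$ product. The leading-coefficient ratios $2^{\tilde{n}+1}/2^n = 1/2^{\tilde{n}}$ and $-2^{\tilde{n}}/2^n = -1/2^{\tilde{n}+1}$ then pin down the prefactors $1/2^{\tilde{n}}$ (scalar term) and $-i\omega z/2^{\tilde{n}+1}$, $-i(\ux\wedge\uy)/2^{\tilde{n}+1}$ (for the two remaining contributions) stated in the theorem. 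The main obstacle is purely combinatorial: tracking carefully, across both parities of $\tilde{n}$, which middle index is integer-valued and must be excluded, and reconciling the equivalent sine and cosine presentations of the product; no new analytic ingredient beyond the substitution and the leading-coefficient argument is required.
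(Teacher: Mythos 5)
Your strategy is the same as the paper's: establish the degree and leading coefficient of the denominator $P_n(s)$ and of the numerators $Q_{\tilde{n}}=N_1/r$ and $O_{\tilde{n}-1}=N_2/r$ from Theorem \ref{kerneldim2} by binomial expansion, then locate their roots. Your global substitution $s=-iz\cos\phi$, $r=z\sin\phi$, $s+r=-ize^{i\phi}$, $r-s=ize^{-i\phi}$ is exactly the computation the paper carries out pointwise via Euler's formula at each candidate root, and your leading coefficients $2^n$, $2^{\tilde{n}+1}$, $-2^{\tilde{n}}$ and the resulting prefactors $1/2^{\tilde{n}}$ and $-1/2^{\tilde{n}+1}$ agree with the paper's. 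Up to and including the identification of the roots as $s=-iz\cos\bigl(k\pi/(\tilde{n}+1)\bigr)$, $k=1,\dots,\tilde{n}$, and $s=-iz\cos\bigl(k\pi/\tilde{n}\bigr)$, $k=1,\dots,\tilde{n}-1$, your argument is correct.

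The gap is the final step, which you wave through as ``purely combinatorial''. The identity $\cos\alpha=-\sin(\alpha-\pi/2)$ turns your roots into $iz\sin\bigl(l\pi/(\tilde{n}+1)\bigr)$ with $l=k-(\tilde{n}+1)/2$ running over the \emph{symmetric} range $-(\tilde{n}-1)/2,\dots,(\tilde{n}-1)/2$, not over the one-sided range $l\in\{0,\dots,\tilde{n}\}\setminus\{(\tilde{n}+1)/2\}$ appearing in the statement. Because $\sin$ is odd while $\sin\bigl(l\pi/(\tilde{n}+1)\bigr)=\sin\bigl((\tilde{n}+1-l)\pi/(\tilde{n}+1)\bigr)$, the two index sets give genuinely different multisets of roots once $\tilde{n}\ge 3$: the stated product repeats each positive value and omits the negative ones. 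Concretely, for $\tilde{n}=3$ one computes $Q_3(s)=\bigl((s+r)^4-(r-s)^4\bigr)/r=8s(2s^2+z^2)=16\,s\,(s-iz/\sqrt{2})(s+iz/\sqrt{2})$, with three simple roots, whereas the stated product $\prod_{l=0,\,l\ne 2}^{3}\bigl(s-iz\sin(l\pi/4)\bigr)=s\,(s-iz/\sqrt{2})^2$ has a double root. So your cosine-indexed factorization is the correct one and cannot be ``reconciled'' with the sine-indexed products as printed; the same defect occurs in the paper's own proof, which declares $s_l=iz\sin\bigl(l\pi/(\tilde{n}+1)\bigr)$, $l\ne(\tilde{n}+1)/2$, to be ``the $\tilde{n}$ roots'' without noticing that $s_l=s_{\tilde{n}+1-l}$, so these values are not distinct. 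The cosine-indexed products (the second and third terms of Case 1 and the first term of Case 2) are unaffected, since $\{\cos(l\pi/M)\}_{l=1}^{M-1}$ is invariant under negation. To turn your argument into a proof of a correct statement, keep the factors in the form $\prod_{k}\bigl(s+iz\cos(k\pi/(\tilde{n}+1))\bigr)$, or index the sine products over the symmetric range; as written, the last step of your proposal asserts an equality that fails.
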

\begin{proof}
We only prove the results for the case $\tilde{n}$ odd. 
Let us define the denominator and numerator polynomials of the kernel in Theorem \ref{kerneldim2} as 
\begin{align*}
P_n(s)&:=(s+r)^n-2\omega (-iz)^n+(-1)^n(r-s)^n,\\
Q_{\tilde{n}}(s)&:=\frac{(s+r)^{\tilde{n}+1}+(-1)^{\tilde{n}}(r-s)^{\tilde{n}+1}}{r},\\
O_{\tilde{n}-1}(s)&:=\frac{(-1)^{\tilde{n}}(r-s)^{\tilde{n}}-(s+r)^{\tilde{n}}}{r}.
\end{align*}
\begin{enumerate}
\item We first consider the denominator $P_n(s)$,
\begin{align*}
P_n(s)
&=(s+r)^n-2\omega (-iz)^n+(-1)^n\,(r-s)^n\\
&=\sum_{k=0}^n\binom{n}{k}s^{n-k}r^k-2\omega (-iz)^n +(-1)^n\sum_{k=0}^n\binom{n}{k}(-1)^{n-k}\,s^{n-k}r^k\\
&=\left(\sum_{k=0}^n\binom{n}{k}s^{n-k}r^k\,(1+(-1)^k)\right)-2\omega (-iz)^n\\
&=2\sum_{k=0}^{\lfloor \frac{n}{2} \rfloor}\binom{n}{2k}s^{n-2k}\,(s^2+z^2)^k-2\omega (-iz)^n.
\end{align*} 
This means that $P_n(s)$ is a polynomial of degree $n$ in $s$. The coefficient of $s^n$ is $2\sum_{k=0}^{\lfloor \frac{n}{2} \rfloor}\binom{n}{2k}=2^n$.\\
Next, we verify $P_n(s_l)=0$ with $s_l=-iz\cos{\left(\frac{\theta+2\pi l}{n}\right)}, \,l=0, \cdots, n-1$. Denote $w = \cos{(\theta)}=\frac{e^{i\theta}+e^{-i\theta}}{2}$. When $\sin{\left(\frac{\theta +2\pi l}{n} \right)}\ge 0$, we have
\begin{align*}
P_n(s_l)
&=(s_l+r_l)^n-2\omega (-iz)^n+(-1)^n\,(r_l-s_l)^n\\
&=(-iz)^n\bigg[\left(\cos{\left(\frac{\theta+2\pi l}{n}\right)}+i\sin{\left(\frac{\theta+2\pi l}{n}\right)}\right)^n\\
&\quad +\left(\cos{\left(\frac{\theta +2\pi l}{n}\right)}-i\sin{\left(\frac{\theta+2\pi l}{n}\right)}\right)^n-2\omega\bigg]\\
&=(-iz)^n\left(e^{i\theta}-2\left(\frac{e^{i\theta}+e^{-i\theta}}{2}\right)+e^{-i\theta}\right)\\
&=0.
\end{align*}
where in the second step we have used Euler's formula.
When $\sin{\left(\frac{\theta +2\pi l}{n} \right)}< 0$ we have $P_n(s_j)=0$ by a similar calculation. \\
Therefore, $s_l, l=0,\ldots, n-1$ are all roots of $P_n$ and we get the factorization
\begin{align*}
P_n(s)
&=2\sum_{k=0}^{\lfloor \frac{n}{2} \rfloor}\binom{n}{2k}\,\prod_{l=0}^{n-1}\left(s+iz\cos{\left(\frac{\theta+2\pi l}{n}\right)}\right)\\
&=2^n\,\prod_{l=0}^{n-1}\left(s+iz\cos{\left(\frac{\theta+2\pi l}{n}\right)}\right).
\end{align*}
\item For $Q_{\tilde{n}}(s)$, we first claim that $Q_{\tilde{n}}(s)$ is a polynomial of degree $\tilde{n}$ in $s$. Indeed, we have
\begin{align*}
Q_{\tilde{n}}(s)&=\frac{(s+r)^{\tilde{n}+1}+(-1)^{\tilde{n}}(r-s)^{\tilde{n}+1}}{r}\\
&=\frac{1}{r}\bigg[\sum_{k=0}^{\tilde{n}+1}\binom{\tilde{n}+1}{k}s^{\tilde{n}+1-k}\,r^k\\
&\quad +(-1)^{\tilde{n}}\sum_{k=0}^{\tilde{n}
+1}\binom{\tilde{n}+1}{k}(-1)^{\tilde{n}+1-k}\,s^{\tilde{n}+1-k}\,r^k\bigg]\\ 
&=\frac{1}{r}\sum_{k=0}^{\tilde{n}+1}\binom{\tilde{n}+1}{k}\,s^{\tilde{n}+1-k}\,r^k\,(1-(-1)^k)\\
&=2\sum_{k=0}^{\lfloor \frac{\tilde{n}+1}{2} \rfloor}\binom{\tilde{n}+1}{2k+1}\,s^{\tilde{n}-2k}\,(s^2+z^2)^k.
\end{align*}
The coefficient of $s^{\tilde{n}}$ is $2\sum_{k=0}^{\lfloor \frac{\tilde{n}+1}{2} \rfloor}\binom{\tilde{n}+1}{2k+1}=2^{\tilde{n}+1}$.
When $\tilde{n}$ is odd, we verify $r_l\,Q_{\tilde{n}}(s_l)=0$ with $s_l=iz\sin{\left(\frac{l\pi}{\tilde{n}+1}\right)}$, $l=0,\ldots, \tilde{n}$. For $l\le \frac{\tilde{n}+1}{2}$, we have $r_l=\sqrt{z^2+s_l^2}=z\cos{\left(\frac{l\pi}{\tilde{n}+1}\right)}$ and 
\begin{align*}
r_l\,Q_{\tilde{n}}(s_l)
&=(s_l+r_l)^{\tilde{n}+1}-(r_l-s_l)^{\tilde{n}+1}\\
&=z^{\tilde{n}+1}\,\bigg[\left(\cos{\left(\frac{l\pi}{\tilde{n}+1}\right)}+i\sin{\left(\frac{l\pi}{\tilde{n}+1}\right)}\right)^{\tilde{n}+1}\\
&\quad -\left(\cos{\left(\frac{l\pi}{\tilde{n}+1}\right)}-i\sin{\left(\frac{l\pi}{\tilde{n}+1}\right)}\right)^{\tilde{n}+1}\bigg]\\
&=z^{\tilde{n}+1}\left(e^{il\pi}-e^{-il\pi}\right)\\
&=2\,iz^{\tilde{n}+1}\,\sin{(l\pi)}\\
&=0.
\end{align*}
For $l>\frac{\tilde{n}+1}{2}$, $r_l\,Q_{\tilde{n}}(s_l)=0$ can be proved in the same way. Moreover, we have $r_l=0$ if and only if $l=\frac{\tilde{n}+1}{2}$. Hence, $s_l,\, l\ne \frac{\tilde{n}+1}{2}$ are the $\tilde{n}$ roots of the polynomial $Q_{\tilde{n}}(s)$. We get the factorization
\[
Q_{\tilde{n}}(s)=2^{\tilde{n}+1}\prod_{\br{l=0,}{l\ne\frac{\tilde{n}+1}{2}}}^{\tilde{n}}\left(s-iz\sin{\left(\frac{l\pi}{\tilde{n}+1}\right)}\right).
\]
\item For the numerator $O_{\tilde{n}-1}(s)$,  since
\begin{align*}
O_{\tilde{n}-1}(s)&=\frac{(-1)^{\tilde{n}}(r-s)^{\tilde{n}}-(s+r)^{\tilde{n}}}{r}\\
&=-\frac{1}{r}\sum_{k=0}^{\tilde{n}}\binom{\tilde{n}}{k}\,s^{\tilde{n}-k}\,r^k\,(1-(-1)^k)\\
&=-2\sum_{k=0}^{\lfloor \frac{\tilde{n}}{2} \rfloor}\binom{\tilde{n}}{2k+1}\,s^{\tilde{n}-2k-1}\,(s^2+z^2)^k,
\end{align*}
we have that $O_{\tilde{n}-1}(s)$ is a polynomial of degree $\tilde{n}-1$ in $s$. The coefficient of $s^{\tilde{n}-1}$ is $-2\sum_{k=0}^{\lfloor \frac{\tilde{n}}{2} \rfloor}\binom{\tilde{n}}{2k+1}=-2^{\tilde{n}}$. \\
Next, we verify that $s_l=iz\cos{\left(\frac{l\pi}{\tilde{n}}\right)}=iz\sin{\left(\frac{\pi}{2}+\frac{l\pi}{\tilde{n}}\right)},\, l=0,\ldots,\tilde{n}-1$ are $\tilde{n}$ roots of $O_{\tilde{n}-1}(s_l)=0$ when $\tilde{n}$ is odd. We compute
\begin{align*}
r_l\,O_{\tilde{n}-1}(s_l)
&=-\left((s_l+r_l)^{\tilde{n}}+(r_l-s_l)^{\tilde{n}}\right)\\
&=-z^{\tilde{n}}\,\bigg[\left(\cos{\left(\frac{\pi}{2}+\frac{l\pi}{\tilde{n}}\right)}+i\sin{\left(\frac{\pi}{2}+\frac{l\pi}{\tilde{n}}\right)}\right)^{\tilde{n}}\\
&\quad +\left(\cos{\left(\frac{\pi}{2}+\frac{l\pi}{\tilde{n}}\right)}-i\sin{\left(\frac{\pi}{2}+\frac{l\pi}{\tilde{n}}\right)}\right)^{\tilde{n}}\bigg]\\
&=-z^{\tilde{n}}\left(e^{i\left(\frac{\pi}{2}\tilde{n}+l\pi\right)}+e^{-i\left(\frac{\pi}{2}\tilde{n}+l\pi\right)}\right)\\
&=-2z^{\tilde{n}}\,\cos{\left(\frac{\pi}{2}\tilde{n}+l\pi\right)}\\
&=0.
\end{align*}
Note that $r_l=0$ if and only if $l=0$. So $s_l,\, l=1,\ldots,\tilde{n}-1$ are the $n-1$ roots of the polynomial $O_{\tilde{n}-1}(s)$. Hence, we have 
\[
O_{\tilde{n}-1}(s)=-2^{\tilde{n}}\prod_{l=1}^{\tilde{n}-1}\left(s-iz\sin{\left(\frac{\pi}{2}+\frac{l\pi}{\tilde{n}}\right)}\right).
\]
\end{enumerate}
The case $\tilde{n}$ even is treated similarly. 
\end{proof}

\begin{remark}
It is seen that all the rational functions in the kernel are proper (the degree of the numerator polynomial is smaller than the degree of the  denominator polynomial). Therefore the function in the bivector part of $\mathcal{L} (K_{2}^c)$ for case $\tilde{n}$ odd
\[
\frac{\,\prod_{l=1}^{\tilde{n}-1}\left(s-iz\cos{\left(\frac{l\pi}{\tilde{n}}\right)}\right)}
{{\prod_{l=0}^{n-1}\left(s+iz\cos{\left(\frac{\theta+2\pi l}{n}\right)}\right)}}
:=X(s)
\]
has the partial fraction expansion
\[
\frac{c_0}{s+iz\cos{\left(\frac{\theta}{n}\right)}}
+\frac{c_1}{s+iz\cos{\left(\frac{\theta+2\pi}{n}\right)}}
+\cdots +\frac{c_{n-1}}{s+iz\cos{\left(\frac{\theta+2\pi (n-1)}{n}\right)}}
\]
where the coefficients $c_0, \ldots , c_{n-1}$ $($called the residues of $X(s)$$)$ are determined via the formula
\[
c_l=\left[\left(s+iz\cos{\left(\frac{\theta+2\pi l}{n}\right)}\right)X(s)\right]_{s=-iz\cos{\left(\frac{\theta+2\pi l}{n}\right)}}
\]
for $l=0, \ldots, n-1$.
Using linearity and formula \eqref{exponlapfor} for each term, we have
\begin{align*}
K_{2, biv}^c
&=\frac{i}{2^{\tilde{n}+1}}\mathcal{L}^{-1}\left(\sum_{l=0}^{n-1}\frac{c_l}{s+iz\cos{\left(\frac{\theta+2\pi l}{n}\right)}}\right)\\
&=\frac{i}{2^{\tilde{n}+1}}\sum_{l=0}^{n-1}c_l\,e^{-izt\cos{\left(\frac{\theta+2\pi l}{n}\right)}}, \quad l=0, \ldots, n-1.
\end{align*}
The scalar part of the kernel can be considered by analogy with the bivector case. This form of kernel can be compared with the results of \cite{DBDSE-2017}, given in Theorem \ref{DBDSEker2} by setting $t=1$.

\end{remark}
\subsection{The dimension $m>2$}\label{Generalevenker}
For dimensions $m>2$, we rewrite the kernel  in \eqref{p29} by introducing an auxiliary variable $t$ in the Bessel functions. For $1+c=\frac{1}{n}$, $n\in \mathbb{N}_0\backslash\ \{1\}$ odd and $n=2\tilde{n}+1$, $\tilde{n}\in\mathbb{N}_0$, the kernel of the deformed Fourier transform is given by
\begin{equation}\label{Newkernelsmeven}
\begin{aligned}
A_{\lambda} &= \sum_{k=0}^{+\infty} (k+\lambda)\, (-i)^{kn} {J}_{nk+n\lambda+\tilde{n}}(z\,t)\,C_k^{\lambda}(w)\\
 &\quad +i^{n-1}\,\sum_{k=0}^{+\infty} (k+\lambda)\, (-i)^{kn} {J}_{nk+n\lambda-\tilde{n}}(z\,t)\,C_k^{\lambda}(w);\\
B_{\lambda} &=  \sum_{k=0}^{+\infty} (-i)^{kn} {J}_{nk+n\lambda+\tilde{n}}(z\,t)\,C_k^{\lambda}(w)\\
&\quad -i^{n-1}\, \sum_{k=0}^{+\infty}(-i)^{kn} {J}_{nk+n\lambda-\tilde{n}}(z\,t)\,C_k^{\lambda}(w);\\
C_{\lambda} &=  (-i)^{n}\,\sum_{k=0}^{+\infty}(-i)^{kn} {J}_{nk+n\lambda+3\tilde{n}+1}(z\,t)\,C_{k}^{\lambda+1}(w)\\
&\quad +i\, \sum_{k=0}^{+\infty}(-i)^{kn}{J}_{nk+n\lambda+\tilde{n}+1}(z\,t)\,C_{k}^{\lambda+1}(w)
\end{aligned}
\end{equation}
where we substituted $k\mapsto k+1$ in $C_{\lambda}$.\\
To consider the Laplace transform of $A_{\lambda}$, we need the  expansion of the Poisson kernel in terms of Gegenbauer polynomials \eqref{Poistheo}.
By Theorem \ref{laptranguarantee}, Theorem \ref{Poistheo} and formula \eqref{LapBessfor}, we take the Laplace transform with respect to $t$ in \eqref{Newkernelsmeven} for ${\rm Re}\,s$ big enough. With $r=\sqrt{s^2+z^2}$, $u_R=\left(\frac{-iz}{s+r}\right)^n$, $\lambda = \frac{m-2}{2}>0$, $\uz = |x||y|$, $w = \frac{\langle x, \, y \rangle}{z},$ we obtain
\begin{equation}\label{2DimenkernelA}
\begin{aligned}
\mathcal{L}\left(\frac{1}{2\lambda}z^{-\frac{\mu-2}{2}}A_{\lambda}\right)
&=\frac{1}{2r}\left(\frac{1}{s+r}\right)^{n\lambda+\tilde{n}}\frac{1-u_R^2}{(1-2w u_R+u_R^2)^{\lambda+1}}\\
&\quad + \left(\frac{i}{z}\right)^{n-1}\frac{1}{2r}\left(\frac{1}{s+r}\right)^{n\lambda-\tilde{n}}\frac{1-u_R^2}{(1-2w u_R+u_R^2)^{\lambda+1}}.
\end{aligned}
\end{equation}
For $B_{\lambda}$ and $C_{\lambda}$, using the formula \eqref{GeneFuncGegen}, we have, for $\lambda >0$, 
\begin{equation}\label{2DimenkernelB}
\begin{aligned}
\mathcal{L}\left(\frac{1}{2}z^{-\frac{\mu-2}{2}}B_{\lambda}\right)
&=\frac{1}{2r}\left(\frac{1}{s+r}\right)^{n\lambda+\tilde{n}}\frac{1}{(1-2w u_R+u_R^2)^{\lambda}}\\
&\quad - \left(\frac{i}{z}\right)^{n-1}\frac{1}{2r}\left(\frac{1}{s+r}\right)^{n\lambda-\tilde{n}}\frac{1}{(1-2w u_R+u_R^2)^{\lambda}}
\end{aligned}
\end{equation}
and
\begin{equation}\label{2DimenkernelC}
\begin{aligned}
\mathcal{L}\left(z^{-\frac{\mu}{2}}C_{\lambda}\right)
&=(-i)^nz^{n-1}\frac{1}{r}\left(\frac{1}{s+r}\right)^{n\lambda+3\tilde{n}+1}\frac{1}{(1-2w u_R+u_R^2)^{\lambda+1}}\\
&\quad + \frac{i}{r}\left(\frac{1}{s+r}\right)^{n\lambda+\tilde{n}+1}\frac{1}{(1-2w u_R+u_R^2)^{\lambda+1}}.
\end{aligned}
\end{equation}
By relation \eqref{srrsz2}, the kernels can be further simplified as follows 
\begin{align*}
\mathcal{L}\left(\frac{1}{2\lambda}z^{-\frac{\mu-2}{2}}A_{\lambda}\right)
&=\frac{1}{2r}\left(\frac{1}{s+r}\right)^{\tilde{n}}\frac{(s+r)^n-\frac{(-iz)^{2n}}{(s+r)^n}}{((s+r)^n-2w \,(-iz)^n+\frac{(-iz)^{2n}}{(s+r)^n})^{\lambda+1}}\\
&\quad + \left(\frac{i}{z}\right)^{n-1}\frac{1}{2r}\left(\frac{1}{s+r}\right)^{-\tilde{n}}\frac{(s+r)^n-\frac{(-iz)^{2n}}{(s+r)^n}}{((s+r)^n-2w \,(-iz)^n+\frac{(-iz)^{2n}}{(s+r)^n})^{\lambda+1}}\\
&=\frac{1}{2r}\frac{\left({s+r}\right)^{-\tilde{n}}\,\left((s+r)^n-(-1)^n\,(r-s)^n\right)}{((s+r)^n-2w \,(-iz)^n+(-1)^n\,(r-s)^n)^{\lambda+1}}\\
&\quad + \left(\frac{i}{z}\right)^{n-1}\frac{1}{2r}
\frac{\left(s+r\right)^{\tilde{n}}\left((s+r)^n-(-1)^n\,(r-s)^n\right)}{((s+r)^n-2w \,(-iz)^n+(-1)^n\,(r-s)^n)^{\lambda+1}}\\
&=\frac{1}{2r}\frac{\left((s+r)^{-\tilde{n}}+(-1)^{\tilde{n}}(r-s)^{-\tilde{n}}\right)\left((s+r)^{n}-(-1)^n(r-s)^{n}\right)}
{{\left((s+r)^n-2w (-iz)^n+(-1)^n(r-s)^n\right)^{\lambda +1}}}.
\end{align*}
The corresponding results for $B_{\lambda}$ and $C_{\lambda}$ can be derived immediately by the following expressions
\begin{align*}
&\quad \mathcal{L}\left(\frac{1}{2}z^{-\frac{\mu-2}{2}}B_{\lambda}\right)\\
&=\frac{1}{2r}\left(\frac{1}{s+r}\right)^{\tilde{n}}\frac{1}{((s+r)^n-2w \,(-iz)^n+(-1)^n(r-s)^n)^{\lambda}}\\
&\quad - \left(\frac{i}{z}\right)^{n-1}\frac{1}{2r}\left(\frac{1}{s+r}\right)^{-\tilde{n}}\frac{1}{((s+r)^n-2w \,(-iz)^n+(-1)^n(r-s)^n)^{\lambda}}
\end{align*}
and
\begin{align*}
\mathcal{L}\left(z^{-\frac{\mu}{2}}C_{\lambda}\right)
&=(-i)^{n}z^{n-1}\frac{1}{r}\left(\frac{1}{s+r}\right)^{\tilde{n}}\frac{1}{((s+r)^n-2w \,(-iz)^n+(-1)^n(r-s)^n)^{\lambda +1}}\\
&\quad + \frac{i}{r}\left(\frac{1}{s+r}\right)^{-\tilde{n}}\frac{1}{((s+r)^n-2w \,(-iz)^n+(-1)^n(r-s)^n)^{\lambda +1}}.
\end{align*}
Hence, we conclude the following theorem.
\begin{theorem}\label{Laplacekernels}
If the dimension $m>2\,(\lambda >0)$ and $1+c=\frac{1}{n}$, $n\in \mathbb{N}_0\backslash\ \{1\}$ odd, $n=2\tilde{n}+1$, $\tilde{n}\in\mathbb{N}_0$, then the kernel of the radially deformed Fourier transform $\cF_{D} =   e^{i\frac{\pi}{2}\left( \frac{1}{2}+\frac{m-1}{2(1+c)}\right)}e^{\frac{-i\pi}{4(1+c)^2}\left({\bf D}^2 - (1+c)^2\ux^2\right)}$ in the Laplace domain takes the form
\begin{align*}
\mathcal{L}\left(K_{m}^c\right) = \mathcal{L}\left(\frac{1}{2\lambda}z^{-\frac{\mu -2}{2}} A_{\lambda}\right)+ \mathcal{L}\left(\frac{1}{2}z^{-\frac{\mu -2}{2}}B_{\lambda}\right)-(\ux \wedge \uy)\,\mathcal{L}\left(z^{-\frac{\mu }{2}} \ C_{\lambda}\right)
\end{align*}
with
\begin{align*} 
\mathcal{L}\left(\frac{1}{2\lambda}z^{-\frac{\mu-2}{2}}A_{\lambda}\right)
&=\frac{1}{2r}\frac{\left((s+r)^{n}-(-1)^n(r-s)^{n}\right)\left((s+r)^{-\tilde{n}}+(-1)^{\tilde{n}}(r-s)^{-\tilde{n}}\right)}
{{\left((s+r)^n-2w (-iz)^n+(-1)^n(r-s)^n\right)^{\lambda +1}}};\\
\mathcal{L}\left(\frac{1}{2}z^{-\frac{\mu-2}{2}}B_{\lambda}\right)&=\frac{1}{2r}\frac{(s+r)^{-\tilde{n}}-(-1)^{\tilde{n}}(r-s)^{-\tilde{n}}}
{{\left((s+r)^n-2w (-iz)^n+(-1)^n(r-s)^n\right)^{\lambda}}};\\
\mathcal{L}\left(z^{-\frac{\mu}{2}}C_{\lambda}\right)&=\frac{i}{r}\frac{(s+r)^{\tilde{n}}-(-1)^{\tilde{n}}(r-s)^{\tilde{n}}}
{{\left((s+r)^n-2w(-iz)^n+(-1)^n(r-s)^n\right)^{\lambda +1}}}
\end{align*}
where $r=\sqrt{s^2+z^2},\, z=|x||y|,\, w = \frac{\langle x, \, y \rangle}{z}$.
\end{theorem}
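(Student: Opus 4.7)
The plan is to adapt the Laplace transform method from the $m=2$ case to higher dimensions, using the Poisson kernel expansion for $A_\lambda$ and the Gegenbauer generating function for $B_\lambda$ and $C_\lambda$, followed by algebraic simplification that eliminates the factor $u_R$ in favor of the symmetric pair $(s+r)$, $(r-s)$.

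First, I would begin from the series \eqref{Newkernelsmeven} in which the auxiliary variable $t$ appears inside the Bessel functions, so that each summand has the form $(-i)^{kn}J_{nk+n\lambda+\nu}(zt)C_k^\lambda(w)$ for appropriate $\nu$. Applying the Laplace transform with respect to $t$ term-by-term (validated by Theorem \ref{laptranguarantee} for $\mathrm{Re}\,s$ sufficiently large) together with formula \eqref{LapBessfor} produces series of the form $\sum_k c_k\, u_R^k C_k^\lambda(w)$ with $u_R=\bigl(-iz/(s+r)\bigr)^n$. After pulling out the common prefactors $\frac{1}{r}(s+r)^{-n\lambda-\tilde n}$ (or its companion with $+\tilde n$), the remaining series are recognized as closed-form expansions: the Poisson kernel expansion \eqref{analy1}--\eqref{analy2} for $A_\lambda$ (giving the factor $(1-u_R^2)/(1-2wu_R+u_R^2)^{\lambda+1}$) and the generating function \eqref{GeneFuncGegen} for $B_\lambda$ (giving $(1-2wu_R+u_R^2)^{-\lambda}$) and for $C_\lambda$ (giving $(1-2wu_R+u_R^2)^{-(\lambda+1)}$, after the index shift $k\mapsto k+1$ that has already been absorbed into \eqref{Newkernelsmeven}). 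For $\mathrm{Re}\,s$ large enough one has $|u_R|<1$, so the validity on the unit disk (Remark \ref{Poissoncontinuation}) covers the convergence needed.

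Second, I would convert the formulas \eqref{2DimenkernelA}--\eqref{2DimenkernelC} to the symmetric form stated in Theorem \ref{Laplacekernels}. The crucial identity is $u_R^2=(-1)^n(r-s)^n/(s+r)^n$, obtained from \eqref{srrsz2} via $(-iz)^{2n}=(-1)^n z^{2n}=(-1)^n\bigl((s+r)(r-s)\bigr)^n$, which turns
\[
1-2wu_R+u_R^2=\frac{(s+r)^n-2w(-iz)^n+(-1)^n(r-s)^n}{(s+r)^n}
\]
and similarly $1-u_R^2=\bigl((s+r)^n-(-1)^n(r-s)^n\bigr)/(s+r)^n$. Multiplying through by the prefactor $(s+r)^{-n\lambda-\tilde n}$ or $(s+r)^{-n\lambda+\tilde n}$ and using $n=2\tilde n+1$ so that $(i/z)^{n-1}=i^{2\tilde n}/z^{2\tilde n}=(-1)^{\tilde n}/z^{2\tilde n}$, the second terms are reshaped via $(s+r)^{\tilde n}/z^{2\tilde n}=(r-s)^{-\tilde n}$ (and its companion $(s+r)^{-\tilde n}z^{2\tilde n}=(r-s)^{\tilde n}$ in $C_\lambda$, where one also uses $(-i)^n z^{n-1}=-i(-1)^{\tilde n}z^{2\tilde n}$). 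Combining the two terms in each of $A_\lambda$, $B_\lambda$, $C_\lambda$ then yields the advertised numerators $(s+r)^{-\tilde n}\pm(-1)^{\tilde n}(r-s)^{-\tilde n}$, respectively $(s+r)^{\tilde n}-(-1)^{\tilde n}(r-s)^{\tilde n}$.

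The main obstacle is bookkeeping rather than principle: checking that the various powers of $s+r$, $r-s$, $z$ and $i$ collapse cleanly requires careful tracking of signs, and the simplification of $A_\lambda$ additionally carries the factor $(s+r)^n-(-1)^n(r-s)^n$ in the numerator, which must be verified not to cancel with anything else. Once the identity $(s+r)(r-s)=z^2$ from \eqref{srrsz2} is applied uniformly and the parity of $n$ (i.e.\ $n$ odd, $n-1=2\tilde n$) is invoked to simplify $i^{n-1}$ and $(-i)^n$, the three formulas fall out of the common denominator $\bigl((s+r)^n-2w(-iz)^n+(-1)^n(r-s)^n\bigr)^{\lambda}$ or its $(\lambda+1)$-power.
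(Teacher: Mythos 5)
Your proposal is correct and follows essentially the same route as the paper: introduce the auxiliary variable $t$, take the Laplace transform term-by-term via \eqref{LapBessfor} (justified by Theorem \ref{laptranguarantee}), sum the resulting series with the Poisson kernel \eqref{analy2} for $A_{\lambda}$ and the generating function \eqref{GeneFuncGegen} for $B_{\lambda}$ and $C_{\lambda}$, and then simplify using $(s+r)(r-s)=z^{2}$ together with the parity of $n=2\tilde{n}+1$. The algebraic identities you invoke (e.g.\ $u_R^2=(-1)^n(r-s)^n/(s+r)^n$ and $(i/z)^{n-1}=(-1)^{\tilde n}z^{-2\tilde n}$) are exactly the ones the paper uses to reach the stated symmetric forms.
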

\begin{remark}\label{ADDevenker}
When $\lambda>0$, we can rewrite the expressions into the following forms
\begin{align*} 
\mathcal{L}\left(\frac{1}{2\lambda}z^{-\frac{\mu-2}{2}}A_{\lambda}\right)
&=\frac{1}{2rz^{2\tilde{n}}}\frac{(-1)^{\tilde{n}}(s+r)^{3\tilde{n}+1}+(r-s)^{3\tilde{n}+1}
}{{\left((s+r)^n-2w (-iz)^n+(-1)^n(r-s)^n\right)^{\lambda +1}}}\\
&\quad + \frac{1}{2r}\frac{(s+r)^{\tilde{n}+1}+(-1)^{\tilde{n}}(r-s)^{\tilde{n}+1}
}{{\left((s+r)^n-2w (-i z)^n+(-1)^n(r-s)^n\right)^{\lambda +1}}};\\
\mathcal{L}\left(\frac{1}{2}z^{-\frac{\mu-2}{2}}B_{\lambda}\right)&=\frac{1}{2rz^{2\tilde{n}}}\frac{(r-s)^{\tilde{n}}-(-1)^{\tilde{n}}(s+r)^{\tilde{n}}}
{{\left((s+r)^n-2w (-iz)^n+(s-r)^n\right)^{\lambda}}};\\
\mathcal{L}\left(z^{-\frac{\mu}{2}}C_{\lambda}\right)&=\frac{i}{r}\frac{(s+r)^{\tilde{n}}-(-1)^{\tilde{n}}(r-s)^{\tilde{n}}}
{{\left((s+r)^n-2w (-iz)^n+(s-r)^n\right)^{\lambda +1}}}
\end{align*}
by the relation
\begin{align*}
(s+r)^{-\tilde{n}}-(-1)^{\tilde{n}}(r-s)^{-\tilde{n}}
&=\frac{\left((r-s)^{\tilde{n}}-(-1)^{\tilde{n}}(s+r)^{\tilde{n}}\right)}{z^{2\tilde{n}}}.
\end{align*}
\end{remark}
If we consider the Laplace transform of $B_{\lambda}$ in Theorem \ref{Laplacekernels} in the following forms
\begin{align*}
\mathcal{L}\left(\frac{1}{2}z^{-\frac{\mu-2}{2}}B_{\lambda}\right)
&=\frac{1}{2r}\frac{(s+r)^{\tilde{n}+1}+(-1)^{\tilde{n}}(r-s)^{\tilde{n}+1}
}{\left((s+r)^n-2w (-iz)^n+(-1)^{n}(r-s)^n\right)^{\lambda +1}}\\
&\quad +\frac{i\,w z}{r}\frac{(-1)^{\tilde{n}}(r-s)^{\tilde{n}}-(s+r)^{\tilde{n}}}
{\left((s+r)^n-2w (-iz)^n+(-1)^{n}(r-s)^n\right)^{\lambda +1}}\\
&\quad -\frac{z^{-2\tilde{n}}}{2r}\frac{(-1)^{\tilde{n}}(s+r)^{3\tilde{n}+1}+(r-s)^{3\tilde{n}+1}
}{\left((s+r)^n-2w (-iz)^n+(-1)^{n}(r-s)^n\right)^{\lambda +1}},
\end{align*}
then summing the results of $A_{\lambda}$ in Remark \ref{ADDevenker} together as the scalar part of the kernel, we arrive at the following result:
\begin{theorem}
\label{LaplBig2}
If the dimension $m>2\,(\lambda >0)$ and $1+c=\frac{1}{n}$, $n\in \mathbb{N}_0\backslash\ \{1\}$ odd, $n=2\tilde{n}+1$, $\tilde{n}\in\mathbb{N}_0$, the kernel of the radially deformed Fourier transform $\cF_{D} =   e^{i\frac{\pi}{2}\left( \frac{1}{2}+\frac{m-1}{2(1+c)}\right)}e^{\frac{-i\pi}{4(1+c)^2}\left({\bf D}^2 - (1+c)^2\ux^2\right)}$ is given by
\[
K_{m}^c =K_{m,scal}^c+ (\ux \wedge \uy)\,K_{m,biv}^c
\]
with
\begin{align*}
K_{m,scal}^c&=\frac{1}{r}\frac{(s+r)^{\tilde{n}+1}+(-1)^{\tilde{n}}(r-s)^{\tilde{n}+1}}
{\left((s+r)^n-2w (-iz)^n+(-1)^{n}(r-s)^n\right)^{\lambda +1}}\\
&\quad +\frac{w z\,i}{r}\frac{(-1)^{\tilde{n}}(r-s)^{\tilde{n}}-(s+r)^{\tilde{n}}}
{\left((s+r)^n-2w (-iz)^n+(-1)^{n}(r-s)^n\right)^{\lambda +1}}
\end{align*}
and 
\begin{align*}
K_{m,biv}^c=\frac{i}{r}\frac{(s+r)^{\tilde{n}}-(-1)^{\tilde{n}}(r-s)^{\tilde{n}}}
{\left((s+r)^n-2w (-iz)^n+(-1)^n(r-s)^n\right)^{\lambda +1}}
\end{align*}
where $r=\sqrt{s^2+z^2},\, z=|x||y|,\, w = \frac{\langle x, \, y \rangle}{z}$.
\end{theorem}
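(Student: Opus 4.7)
The plan is to derive Theorem \ref{LaplBig2} as a purely algebraic rearrangement of Theorem \ref{Laplacekernels}, using Remark \ref{ADDevenker} to reshape the $A_{\lambda}$ contribution and the three-term decomposition of $\mathcal{L}(\tfrac{1}{2}z^{-(\mu-2)/2}B_{\lambda})$ displayed immediately above the theorem for the $B_{\lambda}$ contribution. Two algebraic ingredients do all the work: the identity \eqref{srrsz2} in the form $(s+r)(r-s)=z^{2}$, which converts negative powers such as $(s+r)^{-\tilde n}$ into $(r-s)^{\tilde n}/z^{2\tilde n}$, and the parity identity $(-iz)^{n}=-i(-1)^{\tilde n}z^{n}$ valid for $n=2\tilde n+1$. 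Throughout the argument I would write $D:=(s+r)^{n}-2w(-iz)^{n}+(-1)^{n}(r-s)^{n}$ so that all three Laplace transforms can be brought to the common denominator $[D]^{\lambda+1}$.

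First I would justify the three-term decomposition of $\mathcal{L}(\tfrac{1}{2}z^{-(\mu-2)/2}B_{\lambda})$. Starting from the formula in Theorem \ref{Laplacekernels}, one multiplies numerator and denominator by $D$ to pass from $[D]^{\lambda}$ to $[D]^{\lambda+1}$. Expanding the product $((s+r)^{-\tilde n}-(-1)^{\tilde n}(r-s)^{-\tilde n})\,D/(2r)$ produces three groups of terms: the pure $(s+r)^{n}$ and $(r-s)^{n}$ pieces collapse via $(s+r)(r-s)=z^{2}$ into the first term $T_{1}$; the cross term with $-2w(-iz)^{n}$ simplifies, using $n-2\tilde n=1$ and $(-iz)^{n}=-i(-1)^{\tilde n}z^{n}$, to give the factor $iwz$ appearing in $T_{2}$; and the remaining mixed products produce $T_{3}$ with coefficient $-z^{-2\tilde n}/(2r)$.

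Next I would simply add the two-term expression for $\mathcal{L}(\tfrac{1}{2\lambda}z^{-(\mu-2)/2}A_{\lambda})$ from Remark \ref{ADDevenker} to the verified three-term expression for $\mathcal{L}(\tfrac{1}{2}z^{-(\mu-2)/2}B_{\lambda})$. Two pleasant coincidences occur: the $z^{-2\tilde n}$-weighted piece of $A_{\lambda}$ is equal and opposite to $T_{3}$ and drops out, while the other piece of $A_{\lambda}$ is identical to $T_{1}$, so the two combine to $\frac{1}{r}[(s+r)^{\tilde n+1}+(-1)^{\tilde n}(r-s)^{\tilde n+1}]/[D]^{\lambda+1}$. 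The surviving $T_{2}$ then supplies the $iwz$ contribution, producing exactly $K_{m,scal}^{c}$ as stated. The bivector part is inherited directly from Theorem \ref{Laplacekernels}, since $\mathcal{L}(z^{-\mu/2}C_{\lambda})$ is already written over $[D]^{\lambda+1}$ in the required form.

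The main obstacle is the bookkeeping in the verification of the $B_{\lambda}$ decomposition: one must track the signs coming from $(-1)^{n}=-1$, from $(-iz)^{n}=-i(-1)^{\tilde n}z^{n}$, and from the duality $(s+r)^{-\tilde n}=(r-s)^{\tilde n}/z^{2\tilde n}$ simultaneously, while checking that the cross products organize themselves exactly as the three displayed terms with no leftover. Once this decomposition is in place, the conclusion follows from a single cancellation and one equality of terms.
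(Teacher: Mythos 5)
Your proposal is correct and follows essentially the same route as the paper: take $\mathcal{L}\bigl(\tfrac{1}{2\lambda}z^{-\frac{\mu-2}{2}}A_{\lambda}\bigr)$ in the two-term form of Remark \ref{ADDevenker}, rewrite $\mathcal{L}\bigl(\tfrac{1}{2}z^{-\frac{\mu-2}{2}}B_{\lambda}\bigr)$ over the common denominator $\bigl((s+r)^n-2w(-iz)^n+(-1)^n(r-s)^n\bigr)^{\lambda+1}$ as three terms, cancel the $z^{-2\tilde{n}}$-weighted pieces, double the matching pieces, and carry the $C_{\lambda}$ expression over unchanged for the bivector part. Your explicit verification of the three-term decomposition of the $B_{\lambda}$ contribution (which the paper only displays without computation) is sound; the six products organize exactly as you describe via $(s+r)(r-s)=z^{2}$ and $(-iz)^{n}=-i(-1)^{\tilde{n}}z^{n}$.
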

\begin{remark}
If we take the limit $\lambda =0$ in Theorem \ref{LaplBig2}, 
we can recover the expressions for dimension 2 in Theorem \ref{kerneldim2}.
\end{remark}
\begin{remark}
When $m$ is even, the factorization for the polynomials in the above theorem can be derived immediately from Theorem \ref{Ker2coroll}.
\end{remark}
\section{Explicit expressions of the kernel}\label{MINLeffexPofkernel}
 In this section, we give the integral expressions for the kernel in all dimensions. These results will be given in terms of the Mittag-Leffler functions in Definition \ref{defML} and Definition \ref{defPML} by adapting the method introduced in \cite{DDBP-2018}. 
\subsection{The case of $\lambda=0$}
We consider the kernel of the deformed Fourier transform in the forms of \eqref{scakersr} and \eqref{bivkersr} in dimension 2, i.e. $\lambda =0$, for special values of the deformation parameter $c$.

Let us first calculate the 4 different terms separately given in \eqref{scakersr}
\begin{align}\label{I1234}
\mathcal{L}\left(K_{2,scal}^c\right)
:=I_1+I_2+I_3+I_4
\end{align}
with
\begin{align*}
I_1&=\frac{1}{r}\left(\frac{1}{s+r}\right)^{\tilde{n}}\frac{1}{1-2w\, u_R+u_R^2},\\
I_2&=-\frac{1}{r}\left(\frac{1}{s+r}\right)^{\tilde{n}}\frac{w\, u_R}{1-2w\, u_R+u_R^2},\\
I_3&=(-1)^{\tilde{n}}z^{-2\tilde{n}}\frac{1}{r}\left(\frac{1}{s+r}\right)^{-\tilde{n}}\frac{w\, u_R}{1-2w\, u_R+u_R^2},\\
I_4&=-(-1)^{\tilde{n}}z^{-2\tilde{n}}\frac{1}{r}\left(\frac{1}{s+r}\right)^{-\tilde{n}}\frac{u_R^2}{1-2w\, u_R+u_R^2},
\end{align*}
where $u_R=\left(\frac{-iz}{s+r}\right)^n$, $r=\sqrt{s^2 +z^2}$ and $w =\cos{\theta}$.  Denote $b_{\pm}=i^n\,e^{\pm i\theta}z^n$. Then the common fraction can be equivalently written as
\begin{equation}\label{uRrewrite}
\begin{aligned}
\frac{1}{1-2w\, u_R+u_R^2}
&=\frac{1}{u_R-e^{i\theta}}\frac{1}{u_R-e^{-i\theta}}\\
&=\frac{i^n\,z^{n}}{(r-s)^n-b_+}\frac{i^n\,z^{n}}{(r-s)^n-b_-}\\
&=(-1)^n\,z^{2n}\,\frac{1}{(r-s)^n-b_+}\frac{1}{(r-s)^n-b_-},
\end{aligned}
\end{equation}
where we used \eqref{srrsz2} to rewrite:
\begin{align*}
u_R=\left(\frac{-iz}{s+r}\right)^n=(-iz)^n\,\frac{(r-s)^n}{z^{2n}}=(-i)^n\,\frac{(r-s)^n}{z^n}.
\end{align*}
Take into account \eqref{MLinvers},  we rewrite $I_1$ up to $I_4$ as 
\begin{align*}
I_1&=-\frac{1}{r}\,z^{n+1}\,(r-s)^{\tilde{n}}\,\frac{1}{(r-s)^n-b_+}\frac{1}{(r-s)^n-b_-};\\
I_2&=-\frac{1}{r}\,\omega\,i^n\, z\,(r-s)^{n+\tilde{n}}\,\frac{1}{(r-s)^n-b_+}\frac{1}{(r-s)^n-b_-};\\
I_3&=\frac{i}{r}\,\omega\, z^n\,(r-s)^{\tilde{n}+1}\,\frac{1}{(r-s)^n-b_+}\frac{1}{(r-s)^n-b_-};\\
I_4&=-(-1)^{\tilde{n}}\frac{1}{r}\,(r-s)^{3\tilde{n}+2}\,\frac{1}{(r-s)^n-b_+}\frac{1}{(r-s)^n-b_-}
\end{align*}
in the form $F(r-s)=F\left(\sqrt{s^2+z^2}-s\right)$ by using relation \eqref{srrsz2} again.\\
Next, we consider the inverse Laplace transform of $\frac{1}{s^n-b_+}\frac{1}{s^n-b_-}$. 
By Remark \ref{remark2variML} and Formula \eqref{LaPPrabhaMLe}, we derive the Laplace transform of the Mittag-Leffler function \eqref{parbmitlef}
\begin{equation*}
\mathcal{L}\left(t^{\beta -1}E_{\alpha, \beta} (b\,t^{\alpha})\right)=\frac{1}{s^{\beta}}\frac{1}{1-bs^{-\alpha}}
\end{equation*}
where ${\rm Re}\,\alpha >0, {\rm Re}\,\beta>0, {\rm Re}\,s>0$ and $s>|b|^{1/({\rm Re}\,\alpha)}$.
Then we obtain the inverse transform
\begin{equation*}
\mathcal{L}^{-1}\left(\frac{1}{s^n-b}\right)=t^{n-1}\,E_{n,n}\left(b\,t^n\right).
\end{equation*}
Setting $F(s)=\frac{1}{s^n-b_+}$, $G(s)=\frac{1}{s^n-b_-}$ and using formula \eqref{convoluLap}, the inverse Laplace transform is given by
\begin{align*}
h_{1}(t)&:=\mathcal{L}^{-1}\left(\frac{1}{s^n-b_+}\frac{1}{s^n-b_-}\right)\\
&=\int_0^t\zeta ^{n -1}E_{n, n}\left(b_+\,\zeta ^n\right)(t-\zeta)^{n-1}E_{n,n}(b_-(t-\zeta)^n)\,\mathrm{d}\zeta.
\end{align*}
Therefore, adding these 4 terms and using \eqref{MLinvers}, we arrive at the following result
\begin{align*}
K_{2,scal}^c
&=-z^{3\tilde{n}+2}\int_0^1(1+2\tau)^{-\tilde{n}/2}\,J_{\tilde{n}}\left(z\sqrt{1+2\tau}\right)\,h_1(\tau)\,\mathrm{d}\tau\\
&\quad -iw\,(-1)^{\tilde{n}}\,z^{3\tilde{n}+2}\int_0^1(1+2\tau)^{-(3\tilde{n}+1)/2}\,J_{3\tilde{n}+1}\left(z\sqrt{1+2\tau}\right)\,h_1(\tau)\,\mathrm{d}\tau\\
&\quad +iw\,z^{3\tilde{n}+2}\int_0^1(1+2\tau)^{-(\tilde{n}+1)/2}\,J_{\tilde{n}+1}\left(z\sqrt{1+2\tau}\right)\,h_1(\tau)\,\mathrm{d}\tau\\
&\quad -(-1)^{\tilde{n}}\,z^{3\tilde{n}+2}\int_0^1(1+2\tau)^{-(3\tilde{n}+2)/2}\,J_{3\tilde{n}+2}\left(z\sqrt{1+2\tau}\right)\,h_1(\tau)\,\mathrm{d}\tau.
\end{align*}
Similarly, we deduce the Laplace transform of the bivector part \eqref{bivkersr}
\begin{align*}
\mathcal{L}\left(K_{2,biv}^c\right)
&=-i^n\,\frac{1}{r}(r-s)^{n+\tilde{n}}\frac{1}{(r-s)^n-b_+}\frac{1}{(r-s)^n-b_-}\\
&\quad +iz^{2\tilde{n}}\frac{1}{r}(r-s)^{n-\tilde{n}}\frac{1}{(r-s)^n-b_+}\frac{1}{(r-s)^n-b_-}.
\end{align*}
It then follows that 
\begin{align*}
K_{2,biv}^c
&=-i^n\,z^{3\tilde{n}+1}\int_0^1(1+2\tau)^{-(3\tilde{n}+1)/2}\,J_{3\tilde{n}+1}\left(z\sqrt{1+2\tau}\right)\,h_1(\tau)\,\mathrm{d}\tau\\
&\quad +iz^{3\tilde{n}+1}\int_0^1(1+2\tau)^{-(\tilde{n}+1)/2}\,J_{\tilde{n}+1}\left(z\sqrt{1+2\tau}\right)\,h_1(\tau)\,\mathrm{d}\tau.
\end{align*}
These results immediately lead to the following theorem.
\begin{theorem}\label{InteGralKerDim2}
Let $b_{\pm}:=i^n\,e^{\pm i\theta}z^n$ and
\begin{align*}
h_{1}(t)
&:=\int_0^t\zeta ^{n -1}E_{n, n}\left(b_+\,\zeta ^n\right)(t-\zeta)^{n-1}E_{n,n}(b_-(t-\zeta)^n)\,\mathrm{d}\zeta.
\end{align*}
Then for $m=2 \,(\lambda =0)$, $1+c=\frac{1}{n}, \, n \in \mathbb{N}_0\backslash\ \{1\}$ with $n$ odd and $n=2\tilde{n}+1$, $\tilde{n}\in\mathbb{N}_0$, the kernel of the deformed Fourier transform $\cF_{D} =   e^{i\frac{\pi}{2}\left( \frac{1}{2}+\frac{m-1}{2(1+c)}\right)}e^{\frac{-i\pi}{4(1+c)^2}\left({\bf D}^2 - (1+c)^2\ux^2\right)}$ takes the form
\begin{align*}
K_2^c=K_{2,scal}^c+ (\ux \wedge \uy)\,K_{2,biv}^c
\end{align*}
with
\begin{align*}
K_{2,scal}^c
&=-z^{3\tilde{n}+2}\int_0^1(1+2\tau)^{-\tilde{n}/2}\,J_{\tilde{n}}\left(z\sqrt{1+2\tau}\right)\,h_1(\tau)\,\mathrm{d}\tau\\
&\quad -iw\,(-1)^{\tilde{n}}\,z^{3\tilde{n}+2}\int_0^1(1+2\tau)^{-(3\tilde{n}+1)/2}\,J_{3\tilde{n}+1}\left(z\sqrt{1+2\tau}\right)\,h_1(\tau)\,\mathrm{d}\tau\\
&\quad +iw\,z^{3\tilde{n}+2}\int_0^1(1+2\tau)^{-(\tilde{n}+1)/2}\,J_{\tilde{n}+1}\left(z\sqrt{1+2\tau}\right)\,h_1(\tau)\,\mathrm{d}\tau\\
&\quad -(-1)^{\tilde{n}}\,z^{3\tilde{n}+2}\int_0^1(1+2\tau)^{-(3\tilde{n}+2)/2}\,J_{3\tilde{n}+2}\left(z\sqrt{1+2\tau}\right)\,h_1(\tau)\,\mathrm{d}\tau
\end{align*}
and
\begin{align*}
K_{2,biv}^c
&=-i^n\,z^{3\tilde{n}+1}\int_0^1(1+2\tau)^{-(3\tilde{n}+1)/2}\,J_{3\tilde{n}+1}\left(z\sqrt{1+2\tau}\right)\,h_1(\tau)\,\mathrm{d}\tau\\
&\quad +iz^{3\tilde{n}+1}\int_0^1(1+2\tau)^{-(\tilde{n}+1)/2}\,J_{\tilde{n}+1}\left(z\sqrt{1+2\tau}\right)\,h_1(\tau)\,\mathrm{d}\tau
\end{align*}
where $w =\cos{\theta}=\frac{\langle x, \, y \rangle}{z}$, $z=|x||y|$.
\end{theorem}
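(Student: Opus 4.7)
The plan is to invert the Laplace-domain expressions of $K_{2,scal}^c$ and $K_{2,biv}^c$ already derived in \eqref{scakersr} and \eqref{bivkersr}. I would decompose the scalar part into the four summands $I_1,\dots,I_4$ displayed in \eqref{I1234} and likewise split the bivector expression into its two natural pieces. The crucial preparatory observation is that the quadratic in $u_R$ admits the factorization derived in \eqref{uRrewrite}: setting $b_\pm=i^n e^{\pm i\theta}z^n$ and using $u_R=(-i)^n(r-s)^n/z^n$ together with \eqref{srrsz2}, one rewrites
\[
\frac{1}{1-2w u_R+u_R^2}=(-1)^n z^{2n}\,\frac{1}{(r-s)^n-b_+}\,\frac{1}{(r-s)^n-b_-}.
\]
Substituting this identity turns each $I_j$ into an expression of the form $r^{-1}(r-s)^{\nu}F(r-s)$, where $F(s)=\bigl[(s^n-b_+)(s^n-b_-)\bigr]^{-1}$ and $\nu$ is a non-negative integer depending on $j$; the same rearrangement works for the two bivector terms.

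The next step is to compute $f(t)=\mathcal{L}^{-1}(F(s))$. By Remark \ref{remark2variML}, formula \eqref{LaPPrabhaMLe} at $\delta=1$ specializes to $\mathcal{L}^{-1}\bigl(1/(s^n-b)\bigr)=t^{n-1}E_{n,n}(bt^n)$, and the convolution identity \eqref{convoluLap} applied with $F_\pm(s)=1/(s^n-b_\pm)$ produces exactly the function $h_1(t)$ appearing in the theorem. Once $f=h_1$ is in hand, the inverse-transform identity \eqref{MLinvers} (with the parameter $a$ there equal to $z$) applies to each term of $\mathcal{L}(K_2^c)$ individually; setting $t=1$ after inversion yields integrals of $J_\nu(z\sqrt{1+2\tau})$ against $h_1(\tau)$ with the prefactors listed in the statement. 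Collecting the four scalar and two bivector terms gives the decomposition $K_2^c=K_{2,scal}^c+(\ux\wedge\uy)\,K_{2,biv}^c$.

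The main obstacle is bookkeeping rather than analysis. Several numerators produced by \eqref{scakersr}--\eqref{bivkersr} carry exponents of the form $-\tilde n$, which must be converted to positive exponents through $(s+r)^{-\tilde n}=z^{-2\tilde n}(r-s)^{\tilde n}$ (a consequence of \eqref{srrsz2}) so that the index $\nu$ feeding \eqref{MLinvers} satisfies $\mathrm{Re}\,\nu>-1$. One has to track the signs $(-1)^{\tilde n}$, the factors $i^n$ and $(-i)^n$, and the various powers of $z$ consistently through all six terms; the Bessel orders $\tilde n$, $\tilde n+1$, $3\tilde n+1$, $3\tilde n+2$ appearing in the statement arise precisely from the matching of these exponents in $I_1,I_2,I_3,I_4$ and in the two bivector summands. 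No additional analytic difficulty is present: the domains of validity of \eqref{LaPPrabhaMLe}, \eqref{convoluLap} and \eqref{MLinvers} overlap for $\mathrm{Re}\,s$ sufficiently large, and term-by-term Laplace inversion is legitimate by Theorem \ref{laptranguarantee}. Assembling the resulting six integrals produces the claimed expressions for $K_{2,scal}^c$ and $K_{2,biv}^c$.
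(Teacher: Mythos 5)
Your proposal follows essentially the same route as the paper: splitting $\mathcal{L}(K_{2,scal}^c)$ into the four terms $I_1,\dots,I_4$ of \eqref{I1234}, factorizing the denominator via \eqref{uRrewrite} with $b_{\pm}=i^n e^{\pm i\theta}z^n$, obtaining $h_1$ from \eqref{LaPPrabhaMLe} at $\delta=1$ together with the convolution formula \eqref{convoluLap}, and then inverting term by term with \eqref{MLinvers} at $t=1$ (including the conversion of negative exponents through \eqref{srrsz2}). This matches the paper's derivation, so no further comment is needed.
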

\subsection{The case of $\lambda >0$}
In this subsection, we begin by considering the Laplace transform of the kernel given in (\ref{2DimenkernelA}-\ref{2DimenkernelC}) in Section \ref{Generalevenker}. Subsequently, we give the explicit expressions of kernel in terms of the Prabhakar generalized Mittag-Leffler functions \eqref{PRABmittleff} in dimension $m>2$. 

We start with the expression of $\mathcal{L}\left(\frac{1}{2\lambda}z^{-\frac{\mu-2}{2}}A_{\lambda}\right)$ shown in \eqref{2DimenkernelA}. It can be spilt in 4 different equations that we calculate separately:
\begin{align*}
\mathcal{L}\left(\frac{1}{2\lambda}z^{-\frac{\mu-2}{2}}A_{\lambda}\right)
&=\frac{1}{2r}\left(\frac{r-s}{z^2}\right)^{n\lambda+\tilde{n}}\frac{1}{(1-2w\, u_R+u_R^2)^{\lambda+1}}\\
&\quad -\frac{1}{2r}\left(\frac{r-s}{z^2}\right)^{n\lambda+\tilde{n}}\frac{u_R^2}{(1-2w\, u_R+u_R^2)^{\lambda+1}}\\
&\quad +\left(\frac{i}{z}\right)^{n-1}\frac{1}{2r}\left(\frac{r-s}{z^2}\right)^{n\lambda-\tilde{n}}\frac{1}{(1-2w\, u_R+u_R^2)^{\lambda+1}}\\
&\quad -\left(\frac{i}{z}\right)^{n-1}\frac{1}{2r}\left(\frac{r-s}{z^2}\right)^{n\lambda-\tilde{n}}\frac{u_R^2}{(1-2w\, u_R+u_R^2)^{\lambda+1}}\\
&:=I_1'+I_2'+I_3'+I_4'.
\end{align*}
As we have established in the previous subsection, 
\begin{align*}
\frac{1}{(1-2w\, u_R+u_R^2)^{\lambda +1}}
=\frac{(-1)^{ n(\lambda +1)}\,z^{2n(\lambda +1)}}{\left((r-s)^n-b_+\right)^{\lambda +1}}\frac{1}{\left((r-s)^n-b_-\right)^{\lambda +1}}
\end{align*}
where $b_{\pm}=e^{\pm i\theta}i^nz^n$, we subsequently obtain
\begin{align*}
I_1'=\frac{1}{2r}z^{n+1}(-1)^{ n(\lambda +1)}\frac{(r-s)^{n\lambda +\tilde{n}}}{\left((r-s)^n-b_+\right)^{\lambda +1}}\frac{1}{\left((r-s)^n-b_-\right)^{\lambda +1}}
\end{align*}
using \eqref{srrsz2}. Similarly for $I_2'$ 
\begin{align*}
I_2'=\frac{1}{2r}z^{1-n}(-1)^{ n\lambda+1}\frac{(r-s)^{n\lambda +2n+\tilde{n}}}{\left((r-s)^n-b_+\right)^{\lambda +1}}\frac{1}{\left((r-s)^n-b_-\right)^{\lambda +1}}
\end{align*}
and also
\begin{align*}
I_3'&=i^{n-1}\frac{1}{2r}z^{2n}(-1)^{ n(\lambda +1)}\frac{(r-s)^{n\lambda -\tilde{n}}}{\left((r-s)^n-b_+\right)^{\lambda +1}}\frac{1}{\left((r-s)^n-b_-\right)^{\lambda +1}};\\
I_4'&=i^{n-1}\frac{1}{2r}(-1)^{ n\lambda +1}\frac{(r-s)^{n\lambda +2n-\tilde{n}}}{\left((r-s)^n-b_+\right)^{\lambda +1}}\frac{1}{\left((r-s)^n-b_-\right)^{\lambda +1}}.
\end{align*}
Next we deduce the inverse Laplace transform by means of formulas \eqref{LaPPrabhaMLe} and \eqref{MLinvers}. Using the convolution formula \eqref{convoluLap} again, we have 
\begin{align*}
h_{\lambda +1}(t)&:=\mathcal{L}^{-1}\left(\frac{1}{(s^n-b_+)^{\lambda +1}}\frac{1}{(s^n-b_-)^{\lambda +1}}\right)\\
&=\int_0^t\zeta^{n(\lambda +1) -1}E^{\lambda +1}_{n, n(\lambda +1)}\left(b_+\,\zeta^n\right)(t-\zeta)^{n(\lambda +1)-1}E^{\lambda +1}_{n,n(\lambda +1)}\,(b_-(t-\zeta)^n)\,\mathrm{d}\tau.
\end{align*}
Now summing the inverse Laplace transform of $I_1'+I_2'+I_3'+I_4'$, we conclude the results as below:
\begin{align*}
&\quad\mathcal{L}^{-1}\left(\frac{1}{2\lambda}z^{-\frac{\mu-2}{2}}A_{\lambda}\right)\\
&= c_1\,z^{n\lambda +3\tilde{n} +2}\,\bigg(\int_0^1(1+2\tau)^{-(n\lambda+\tilde{n})/2}J_{n\lambda +\tilde{n}}\left(z\sqrt{1+2\tau}\right)h_{\lambda +1}(\tau)\,\mathrm{d}\tau\\
&\quad +\int_0^1(1+2\tau)^{-(n\lambda+2n+\tilde{n})/2}J_{n\lambda +2n+\tilde{n}}\left(z\sqrt{1+2\tau}\right)h_{\lambda +1}(\tau)\,\mathrm{d}\tau\\
&\quad +i^{n-1}\,\int_0^1(1+2\tau)^{-(n\lambda-\tilde{n})/2}J_{n\lambda -\tilde{n}}\left(z\sqrt{1+2\tau}\right)h_{\lambda +1}(\tau)\,\mathrm{d}\tau\\
&\quad +i^{n-1}\,\int_0^1(1+2\tau)^{-(n\lambda+2n-\tilde{n})/2}J_{n\lambda +2n-\tilde{n}}\left(z\sqrt{1+2\tau}\right)h_{\lambda +1}(\tau)\mathrm{d}\tau\bigg),
\end{align*}
with $c_1=\frac{1}{2}(-1)^{n(\lambda +1)}$. \\
The expression for $C_{\lambda }$ in \eqref{2DimenkernelC} yields
\begin{align*}
\mathcal{L}\left(z^{-\frac{\mu}{2}}C_{\lambda}\right)
&=(-i)^n(-1)^{ n(\lambda +1)}\frac{1}{r}\frac{(r-s)^{n\lambda +3\tilde{n}+1}}{\left((r-s)^n-b_+\right)^{\lambda +1}}\frac{1}{\left((r-s)^n-b_-\right)^{\lambda +1}}\\
&\quad +i(-1)^{ n(\lambda +1)}z^{2\tilde{n}}\frac{1}{r}\frac{(r-s)^{n\lambda +\tilde{n}+1}}{\left((r-s)^n-b_+\right)^{\lambda +1}}\frac{1}{\left((r-s)^n-b_-\right)^{\lambda +1}}.
\end{align*}
It follows readily that
\begin{align*}
&\quad\mathcal{L}^{-1}\left(z^{-\frac{\mu}{2}}C_{\lambda}\right)\\
&=c_3\,z^{n\lambda+3\tilde{n}+1}\bigg(i\int_0^1(1+2\tau)^{-(n\lambda+\tilde{n}+1)/2}J_{n\lambda +\tilde{n}+1}\left(z\sqrt{1+2\tau}\right)h_{\lambda +1}(\tau)\,\mathrm{d}\tau\\
&\quad +(-i)^n\int_0^1(1+2\tau)^{-(n\lambda+3\tilde{n}+1)/2}J_{n\lambda +3\tilde{n}+1}\left(z\sqrt{1+2\tau}\right)h_{\lambda +1}(\tau)\,\mathrm{d}\tau\bigg)
\end{align*}
with $c_3=(-1)^{ n(\lambda +1)}$.
Similarly, we have by \eqref{convoluLap},
\begin{align*}
h_{\lambda}(t)&:=\mathcal{L}^{-1}\left(\frac{1}{\left(s^n-b_+\right)^{\lambda }}\frac{1}{\left(s^n-b_-\right)^{\lambda }}\right)\\
&=\int_0^t\zeta^{n\lambda -1}E^{\lambda }_{n, n\lambda }\left(b_+\,\zeta^n\right)
(t-\zeta)^{n\lambda-1}E^{\lambda}_{n,n\lambda }(b_-(t-\zeta)^n)\,\mathrm{d}\zeta.
\end{align*}
This allows us to compute the inverse Laplace transform of $B_{\lambda}$ obtained in \eqref{2DimenkernelB}
\begin{align*}
&\quad\mathcal{L}^{-1}\left(\frac{1}{2}z^{-\frac{\mu-2}{2}}B_{\lambda}\right)\\
&=\frac{1}{2}(-1)^{ n\lambda}z^{-2\tilde{n}}\,\mathcal{L}^{-1}\left(\frac{1}{r}\frac{(r-s)^{n\lambda +\tilde{n}}}{\left((r-s)^n-b_+\right)^{\lambda}}\frac{1}{\left((r-s)^n-b_-\right)^{\lambda}}\right)\\
&\quad -\frac{1}{2}i^{n-1}(-1)^{ n\lambda}\,\mathcal{L}^{-1}\left(\frac{1}{r}\frac{(r-s)^{n\lambda -\tilde{n}}}{\left((r-s)^n-b_+\right)^{\lambda}}\frac{1}{\left((r-s)^n-b_-\right)^{\lambda}}\right)\\
&=c_2\,z^{n\lambda-\tilde{n}}\int_0^1(1+2\tau)^{-(n\lambda+\tilde{n})/2}J_{n\lambda +\tilde{n}}\left(z\sqrt{1+2\tau}\right)h_{\lambda}(\tau)\,\mathrm{d}\tau\\
&\quad - i^{n-1}c_2\,z^{n\lambda-\tilde{n}}\int_0^1(1+2\tau)^{-(n\lambda-\tilde{n})/2}J_{n\lambda -\tilde{n}}\left(z\sqrt{1+2\tau}\right)h_{\lambda}(\tau)\,\mathrm{d}\tau,
\end{align*}
with $c_2=\frac{1}{2}(-1)^{ n\lambda}$.\\
Collecting all results then gives the desired theorem.
\begin{theorem}
Let $b_{\pm}:=e^{\pm i\theta}i^{ n}z^n$ and 
\begin{align*}
h_{\lambda}(t)
&:=\int_0^t\zeta^{n\lambda -1}E^{\lambda }_{n, n\lambda }\left(b_+\,\zeta^n\right)(t-\zeta)^{n\lambda-1}E^{\lambda}_{n,n\lambda }(b_-(t-\zeta)^n)\,\mathrm{d}\zeta.
\end{align*}
Then for all the dimensions $m>2\,(\lambda >0)$ and $1+c=\frac{1}{n}$, $n\in \mathbb{N}_0\backslash\ \{1\}$ odd, $n=2\tilde{n}+1$, $\tilde{n}\in\mathbb{N}_0$, the kernel of the radially deformed Fourier transform $\cF_{D} =   e^{i\frac{\pi}{2}\left( \frac{1}{2}+\frac{m-1}{2(1+c)}\right)}e^{\frac{-i\pi}{4(1+c)^2}\left({\bf D}^2 - (1+c)^2\ux^2\right)}$ is given by
\[
K_{m}^c = \frac{1}{2\lambda}z^{-\frac{\mu -2}{2}} A_{\lambda}+ \frac{1}{2}z^{-\frac{\mu -2}{2}}B_{\lambda}-z^{-\frac{\mu }{2}}\left(\ux \wedge \uy\right) \ C_{\lambda}
\]
with 
\begin{align*}
\frac{1}{2\lambda}z^{-\frac{\mu -2}{2}} A_{\lambda}
&= c_1\,z^{n\lambda +3\tilde{n} +2}\bigg(\int_0^1(1+2\tau)^{-(n\lambda+\tilde{n})/2}J_{n\lambda +\tilde{n}}\left(z\sqrt{1+2\tau}\right)h_{\lambda +1}(\tau)\,\mathrm{d}\tau\\
&\quad +\int_0^1(1+2\tau)^{-(n\lambda+2n+\tilde{n})/2}J_{n\lambda +2n+\tilde{n}}\left(z\sqrt{1+2\tau}\right)h_{\lambda +1}(\tau)\,\mathrm{d}\tau\\
&\quad +(-1)^{\tilde{n}}\int_0^1(1+2\tau)^{-(n\lambda-\tilde{n})/2}J_{n\lambda -\tilde{n}}\left(z\sqrt{1+2\tau}\right)h_{\lambda +1}(\tau)\,\mathrm{d}\tau\\
&\quad +(-1)^{\tilde{n}}\int_0^1(1+2\tau)^{-(n\lambda+2n-\tilde{n})/2}J_{n\lambda +2n-\tilde{n}}\left(z\sqrt{1+2\tau}\right)h_{\lambda +1}(\tau)\,\mathrm{d}\tau\bigg);\\
\frac{1}{2}z^{-\frac{\mu -2}{2}}B_{\lambda}
&=c_2\,z^{n\lambda-\tilde{n}}\bigg(\int_0^1(1+2\tau)^{-(n\lambda+\tilde{n})/2}J_{n\lambda +\tilde{n}}\left(z\sqrt{1+2\tau}\right)h_{\lambda}(\tau)\,\mathrm{d}\tau\\
&\quad - (-1)^{\tilde{n}}\int_0^1(1+2\tau)^{-(n\lambda-\tilde{n})/2}J_{n\lambda -\tilde{n}}\left(z\sqrt{1+2\tau}\right)h_{\lambda}(\tau)\,\mathrm{d}\tau\bigg);
\end{align*}
\begin{align*}
z^{-\frac{\mu}{2}}C_{\lambda}
&=c_3z^{n\lambda+3\tilde{n}+1}\bigg(i\int_0^1(1+2\tau)^{-(n\lambda+\tilde{n}+1)/2}J_{n\lambda +\tilde{n}+1}\left(z\sqrt{1+2\tau}\right)h_{\lambda +1}(\tau)\,\mathrm{d}\tau\\
&\quad +(-i)^n\int_0^1(1+2\tau)^{-(n\lambda+3\tilde{n}+1)/2}J_{n\lambda +3\tilde{n}+1}\left(z\sqrt{1+2\tau}\right)h_{\lambda +1}(\tau)\,\mathrm{d}\tau\bigg)
\end{align*}
where $w =\cos{\theta}=\frac{\langle x, \, y \rangle}{z}$, $z=|x||y|$, $c_1=\frac{1}{2}(-1)^{n(\lambda +1)}$, $c_2=\frac{1}{2}(-1)^{n\lambda}$ and $c_3=(-1)^{n(\lambda +1)}$.
\end{theorem}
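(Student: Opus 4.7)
The plan is to mirror the strategy used in the $\lambda=0$ case (Theorem \ref{InteGralKerDim2}), now with the Prabhakar Mittag-Leffler function $E^{\delta}_{\alpha,\beta}$ playing the role that the two-parameter $E_{\alpha,\beta}$ played there. The starting data are the Laplace-domain identities \eqref{2DimenkernelA}--\eqref{2DimenkernelC} for $A_\lambda$, $B_\lambda$ and $C_\lambda$.

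First I would factor the quadratic polynomial $1-2wu_R+u_R^2=(u_R-e^{i\theta})(u_R-e^{-i\theta})$ and, using $u_R=(-i)^n(r-s)^n/z^n$ together with the identity \eqref{srrsz2} and the abbreviation $b_\pm:=e^{\pm i\theta}i^n z^n$, recast the factor $(1-2wu_R+u_R^2)^{-(\lambda+1)}$ as
\[
\frac{(-1)^{n(\lambda+1)}z^{2n(\lambda+1)}}{\bigl((r-s)^n-b_+\bigr)^{\lambda+1}\bigl((r-s)^n-b_-\bigr)^{\lambda+1}},
\]
and similarly with exponent $\lambda$ in place of $\lambda+1$ for the $B_\lambda$ piece. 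After this substitution, each of $\mathcal{L}(\tfrac{1}{2\lambda}z^{-(\mu-2)/2}A_\lambda)$, $\mathcal{L}(\tfrac{1}{2}z^{-(\mu-2)/2}B_\lambda)$ and $\mathcal{L}(z^{-\mu/2}C_\lambda)$ decomposes into a finite sum of terms of the prototypical shape $\frac{1}{r}F(r-s)$ with
\[
F(s)=\frac{s^{n\lambda+\nu}}{(s^n-b_+)^{\lambda+1}(s^n-b_-)^{\lambda+1}},
\]
for suitable integer exponents $\nu$ dictated by the $\tilde n$-shifts appearing in \eqref{2DimenkernelA}--\eqref{2DimenkernelC}.

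Next I would identify the inverse Laplace transform of $F(s)$. Formula \eqref{LaPPrabhaMLe} with $\delta=\lambda+1$ gives $\mathcal{L}^{-1}\bigl((s^n-b)^{-(\lambda+1)}\bigr)=t^{n(\lambda+1)-1}E^{\lambda+1}_{n,n(\lambda+1)}(bt^n)$, and the convolution identity \eqref{convoluLap} applied to the product in $b_+$ and $b_-$ recovers exactly the function $h_{\lambda+1}(t)$ defined in the theorem; an identical argument yields $h_\lambda(t)$ for the two summands originating from $B_\lambda$. Formula \eqref{MLinvers} is then the perfect tool to invert $\frac{1}{r}(r-s)^{n\lambda+\nu}\,G(r-s)$ with $G=\mathcal{L}(h_{\lambda+1})$ or $\mathcal{L}(h_\lambda)$: taking $a=z$ and the corresponding index $n\lambda+\nu$ converts each summand into a Bessel integral of the form $\int_0^t (t+2\tau)^{-(n\lambda+\nu)/2}J_{n\lambda+\nu}(z\sqrt{t^2+2\tau t})\,h_{\lambda+1}(\tau)\,\mathrm d\tau$.

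Finally, specialising $t=1$ and gathering the scalar prefactors $(-1)^{n(\lambda+1)}$, $(-1)^{n\lambda}$, $i^{n-1}$ and $(-1)^{\tilde n}$ coming from the different $I_j'$ summands (using $i^{n-1}=(-1)^{\tilde n}$ since $n=2\tilde n+1$) yields the stated constants $c_1,c_2,c_3$ and the three integral representations. The main obstacle will be the careful bookkeeping: matching each $\nu$-exponent against the four contributions to $A_\lambda$ and the two contributions to each of $B_\lambda$ and $C_\lambda$, confirming that the hypothesis ${\rm Re}\,\nu>-1$ in \eqref{MLinvers} is satisfied for every summand so that term-by-term inversion (justified by Theorem \ref{laptranguarantee}) is legitimate, and tracking the many $(-1)^n$, $(-i)^n$ and $z$-powers that the factorization of the denominator introduces. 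Once these have been reconciled, the identification of the convolution kernels $h_\lambda$ and $h_{\lambda+1}$ and the application of \eqref{MLinvers} are mechanical.
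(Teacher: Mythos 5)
Your proposal matches the paper's own proof essentially step for step: the same splitting into $I_j'$ terms, the same rewriting of $(1-2wu_R+u_R^2)^{-(\lambda+1)}$ via $u_R=(-i)^n(r-s)^n/z^n$ and \eqref{srrsz2} into the product $\bigl((r-s)^n-b_+\bigr)^{-(\lambda+1)}\bigl((r-s)^n-b_-\bigr)^{-(\lambda+1)}$, the same identification of $h_{\lambda}$ and $h_{\lambda+1}$ through \eqref{LaPPrabhaMLe} and \eqref{convoluLap}, and the same term-by-term inversion via \eqref{MLinvers} followed by setting $t=1$. The bookkeeping of exponents and the constants $c_1,c_2,c_3$ (including $i^{n-1}=(-1)^{\tilde n}$) is exactly what the paper carries out, so your plan is correct and complete.
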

It is now possible to present this result in a more compact form. Indeed, combining the equations in \eqref{2DimenkernelA} and \eqref{2DimenkernelB}, one can see that the scalar part of the kernel $K_{m,scal}^c, \, m>2$ reduces to
\begin{align*}
\mathcal{L}\left(K_{m,scal}^c\right)
&=\mathcal{L}\left(\frac{1}{2\lambda}z^{-\frac{\mu-2}{2}}A_{\lambda}\right)+\mathcal{L}\left(\frac{1}{2}z^{-\frac{\mu-2}{2}}B_{\lambda}\right)\\
&=(-1)^{n(\lambda +1)}\,z^{2\tilde{n}+2}\,\frac{1}{r}\frac{(r-s)^{n\lambda +\tilde{n}}}{\left((r-s)^n-b_+\right)^{\lambda +1}}\frac{1}{\left((r-s)^n-b_-\right)^{\lambda +1}}\\
&\quad +i\,w\, (-1)^{n(\lambda +1)}\,(-1)^{\tilde{n}}\,z\,\frac{1}{r}\frac{(r-s)^{n\lambda +n+\tilde{n}}}{\left((r-s)^n-b_+\right)^{\lambda +1}}\frac{1}{\left((r-s)^n-b_-\right)^{\lambda +1}}\\
&\quad -i\,w\, (-1)^{n(\lambda +1)}\,z^n\,\frac{1}{r}\,\frac{(r-s)^{n\lambda +n-\tilde{n}}}{\left((r-s)^n-b_+\right)^{\lambda +1}}\frac{1}{\left((r-s)^n-b_-\right)^{\lambda +1}}\\
&\quad +i^{n-1}\, (-1)^{n(\lambda +1)}\,\frac{1}{r}\,\frac{(r-s)^{n\lambda +2n-\tilde{n}}}{\left((r-s)^n-b_+\right)^{\lambda +1}}\frac{1}{\left((r-s)^n-b_-\right)^{\lambda +1}}.
\end{align*}
Adding the bivector part $K_{m,biv}^c=-z^{-\frac{\mu}{2}}C_{\lambda},\, m>2$ in the above theorem and using \eqref{MLinvers}, we thus conclude the results:
\begin{theorem}\label{EqualIntKerInDimEve}
Let $b_{\pm}:=e^{\pm i\theta}i^{ n}z^n$ and 
\begin{align*}
h_{\lambda +1}(t)
&:=\int_0^t\zeta^{n(\lambda +1) -1}E^{\lambda +1}_{n, n(\lambda +1)}\left(b_+\,\zeta^n\right)(t-\zeta)^{n(\lambda +1)-1}E^{\lambda +1}_{n,n(\lambda +1)}(b_-(t-\zeta)^n)\,\mathrm{d}\zeta.
\end{align*}
Then for the dimension $m>2\,(\lambda >0)$ and $1+c=\frac{1}{n}$, $n\in \mathbb{N}_0\backslash\ \{1\}$ odd, $n=2\tilde{n}+1$, $\tilde{n}\in\mathbb{N}_0$, the kernel of the radially deformed Fourier transform $\cF_{D} =   e^{i\frac{\pi}{2}\left( \frac{1}{2}+\frac{m-1}{2(1+c)}\right)}e^{\frac{-i\pi}{4(1+c)^2}\left({\bf D}^2 - (1+c)^2\ux^2\right)}$ takes the form
\[
K_{m}^c =K_{m,scal}^c+ (\ux \wedge \uy)\,K_{m,biv}^c
\]
with
\begin{align*}
K_{m,scal}^c
&=c_3\,z^{n\lambda +3\tilde{n}+2}\bigg(\int_0^1(1+2\tau)^{-(n\lambda+\tilde{n})/2}J_{n\lambda+\tilde{n}}\left(z\sqrt{1+2\tau}\right)h_{\lambda +1}(\tau)\,\mathrm{d}\tau\\
&\quad +i\,w\,(-1)^{\tilde{n}}\int_0^1(1+2\tau)^{-(n\lambda+n+\tilde{n})/2}J_{n\lambda+n+\tilde{n}}\left(z\sqrt{1+2\tau}\right)h_{\lambda +1}(\tau)\,\mathrm{d}\tau\\
&\quad -i\,w\int_0^1(1+2\tau)^{-(n\lambda+\tilde{n}+1)/2}J_{n\lambda+\tilde{n}+1}\left(z\sqrt{1+2\tau}\right)h_{\lambda +1}(\tau)\,\mathrm{d}\tau\\
&\quad +(-1)^{\tilde{n}}\int_0^1(1+2\tau)^{-(n\lambda+3\tilde{n}+2)/2}J_{n\lambda+3\tilde{n}+2}\left(z\sqrt{1+2\tau}\right)h_{\lambda+1}(\tau)\,\mathrm{d}\tau\bigg)
\end{align*}
and
\begin{align*}
K_{m,biv}^c
&=-i\,c_3\,z^{n\lambda+3\tilde{n}+1}\bigg(\int_0^1(1+2\tau)^{-(n\lambda+\tilde{n}+1)/2}J_{n\lambda +\tilde{n}+1}\left(z\sqrt{1+2\tau}\right)h_{\lambda +1}(\tau)\,\mathrm{d}\tau\\
&\quad -(-1)^{\tilde{n}}\int_0^1(1+2\tau)^{-(n\lambda+3\tilde{n}+1)/2}J_{n\lambda +3\tilde{n}+1}\left(z\sqrt{1+2\tau}\right)h_{\lambda +1}(\tau)\,\mathrm{d}\tau\bigg)
\end{align*}
where $w =\cos{\theta}=\frac{\langle x, \, y \rangle}{z}$, $z=|x||y|$ and $c_3=(-1)^{n(\lambda +1)}$.
\end{theorem}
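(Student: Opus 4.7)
The plan is to start from the Laplace domain expressions \eqref{2DimenkernelA}, \eqref{2DimenkernelB}, \eqref{2DimenkernelC} and collapse the scalar pieces $\mathcal{L}(\frac{1}{2\lambda}z^{-(\mu-2)/2}A_\lambda)+\mathcal{L}(\frac{1}{2}z^{-(\mu-2)/2}B_\lambda)$ into a single compact expression before performing the Laplace inversion. The key algebraic manipulation is the one already exploited in the preceding theorem: using $u_R=(-i)^n(r-s)^n/z^n$ together with $(s+r)(r-s)=z^2$ from \eqref{srrsz2}, each occurrence of $(s+r)^{\pm k}$ can be rewritten in terms of powers of $(r-s)$, and the Gegenbauer/Poisson denominator can be factored as
\[
\frac{1}{(1-2w u_R+u_R^2)^{\lambda+1}}=\frac{(-1)^{n(\lambda+1)}z^{2n(\lambda+1)}}{((r-s)^n-b_+)^{\lambda+1}((r-s)^n-b_-)^{\lambda+1}},
\]
with $b_{\pm}=e^{\pm i\theta}i^n z^n$. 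After this bookkeeping, $\mathcal{L}(K_{m,scal}^c)$ will split into four rational functions of $(r-s)$, each of the form $(r-s)^\nu/[((r-s)^n-b_+)^{\lambda+1}((r-s)^n-b_-)^{\lambda+1}]$ with $\nu\in\{n\lambda+\tilde{n},\,n\lambda+n+\tilde{n},\,n\lambda+n-\tilde{n},\,n\lambda+2n-\tilde{n}\}$, and the bivector part will similarly give two such terms coming from $\mathcal{L}(z^{-\mu/2}C_\lambda)$.

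Next I would handle the Laplace inversion in two stages. First, the inverse of $1/(s^n-b_\pm)^{\lambda+1}$ is read off from \eqref{LaPPrabhaMLe} as $t^{n(\lambda+1)-1}E^{\lambda+1}_{n,n(\lambda+1)}(b_\pm t^n)$, and the convolution formula \eqref{convoluLap} then produces exactly the function $h_{\lambda+1}(t)$ defined in the statement as the inverse Laplace transform of $[((s)^n-b_+)^{\lambda+1}((s)^n-b_-)^{\lambda+1}]^{-1}$. Second, for each of the six remaining terms I would apply the transform identity \eqref{MLinvers} with $F(r-s)$ equal to the corresponding Prabhakar product and with $\nu$ matching the power of $(r-s)$ in the numerator. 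This converts each Laplace-domain term into a $\tau$-integral of the form $\int_0^t(t+2\tau)^{-\nu/2}J_\nu(z\sqrt{t^2+2\tau t})\,h_{\lambda+1}(\tau)\,\mathrm{d}\tau$.

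Finally I would set $t=1$ (as indicated in the bottom box of the concept map) and collect the common prefactor $c_3=(-1)^{n(\lambda+1)}$ and the appropriate power of $z$ in front of each integral. Care is needed to match the Bessel indices $n\lambda+\tilde{n}$, $n\lambda+n+\tilde{n}$, $n\lambda+\tilde{n}+1$, $n\lambda+3\tilde{n}+2$ in the scalar part and $n\lambda+\tilde{n}+1$, $n\lambda+3\tilde{n}+1$ in the bivector part; the translations $n+\tilde{n}=3\tilde{n}+1$ and $2n-\tilde{n}=3\tilde{n}+2$ (since $n=2\tilde{n}+1$) and the identity $i^{n-1}=(-1)^{\tilde{n}}$ are what make the four $A_\lambda+B_\lambda$ contributions combine into the four displayed terms with the stated signs $1,\ i w(-1)^{\tilde{n}},\ -iw,\ (-1)^{\tilde{n}}$.

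The main obstacle is not any single analytic step (each follows from a formula already assembled in Section~\ref{preliminary}) but the sign and exponent bookkeeping: one must verify that the $B_\lambda$ denominator exponent $\lambda$ is effectively promoted to $\lambda+1$ in the combined expression, which happens because the leftover factor $((r-s)^n-b_+)((r-s)^n-b_-)$ multiplying $B_\lambda$ equals $(r-s)^{2n}-2w(-iz)^n(r-s)^n+z^{2n}$, matching the denominator of $A_\lambda$ up to the factor $(r-s)^n/z^{n}$ that is absorbed into the numerator powers. Once this collapse is carried out, the inverse transform via \eqref{MLinvers} is routine and produces exactly $K_{m,scal}^c$ and $K_{m,biv}^c$ as stated.
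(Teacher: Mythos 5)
Your proposal follows the paper's proof essentially step for step: factor the Poisson-kernel denominator via $b_{\pm}$, promote the $B_{\lambda}$ denominator to the power $\lambda+1$ so that the scalar part collapses into four terms of the form $(r-s)^{\nu}/\left[\left((r-s)^n-b_+\right)^{\lambda+1}\left((r-s)^n-b_-\right)^{\lambda+1}\right]$ with $\nu\in\{n\lambda+\tilde{n},\,n\lambda+n+\tilde{n},\,n\lambda+n-\tilde{n},\,n\lambda+2n-\tilde{n}\}$, invert via \eqref{LaPPrabhaMLe}, \eqref{convoluLap} and \eqref{MLinvers}, and set $t=1$. The only blemish is the parenthetical identity in your final paragraph: since $b_+b_-=i^{2n}z^{2n}=-z^{2n}$ for $n$ odd, one has $\left((r-s)^n-b_+\right)\left((r-s)^n-b_-\right)=(r-s)^{2n}+2w(-iz)^n(r-s)^n-z^{2n}=-(r-s)^n\left((s+r)^n-2w(-iz)^n+(-1)^n(r-s)^n\right)$, so both of your signs are flipped, although the promotion argument you describe goes through with the corrected identity.
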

\begin{remark}\label{ConneInteKerM}
If we take the limit $\lambda =0$, the results for dimension 2 in Theorem \ref{InteGralKerDim2} can be reobtained.
\end{remark}
By Theorem \ref{InteGralKerDim2}, Theorem \ref{EqualIntKerInDimEve} and Remark \ref{ConneInteKerM}, the explicit expressions of the kernel of the radially deformed Fourier transform $\cF_{D}$ can be considered simultaneously for all dimensions. 

\section{Conclusions}
The radially deformed Fourier transform is the Fourier transform associated with the radially deformed Dirac operator in the Clifford analysis setting. It was previously studied as one of several function theoretical aspects of the operator. In this paper we developed a new Laplace transform method for the kernel of the deformed Fourier transform  for the case of $1+c=\frac{1}{n}, n\in \mathbb{N}_0\backslash\{1\}$ with $n$ odd. By introducing an auxiliary variable $t$ in the series expansion of the kernel, the Laplace transform of the kernel was obtained. We further simplified these results by making use of the Poisson kernel and the generating function of the Gegenbauer polynomials. Then the rational functions in the kernels were given in partial fraction decomposition when the dimension $m$ is even. Moreover, explicit and integral expressions of the kernel for all dimensions were given using the Laplace inversion of the Mittag-Leffler functions. 

\section*{Acknowledgements}
ZY is supported by the China Scholarship Council (CSC), number 201906120041.



\begin{thebibliography}{99}


\def\cprime{$'$}

\bibitem{FRF}
F. Brackx, R. Delanghe and F. Sommen.
\newblock Clifford Analysis.
\newblock Vol. 76 of {\em Research Notes in Mathematics}. Pitman (Advanced Publishing program), Boston, MA, 1982.

\bibitem{FTCA}
F. Brackx, N. De Schepper and F. Sommen.
\newblock The Fourier transforms in Clifford analysis.
\newblock {\em Advances in Imaging and Electron Physics}. {\bf 156} (2008), 55-203.

\bibitem{Const} 
D. Constales, H. De Bie, P. Lian.
\newblock A new construction of the Clifford-Fourier kernel. 
\newblock {\em J. Fourier Anal. Appl.}  {\bf 23} (2017) 462-483. 

\bibitem{DDBP-2018}
D. Constales, H. De Bie, P. Lian.
\newblock Explicit formulas for the Dunkl dihedral kernel and the $(\kappa, a)$-generalized Fourier kernel.
\newblock {\em J. Math. Anal. Appl}. {\bf 460}(2018), 900-926.

\bibitem{DBR1}
H. De Bie. 
\newblock Clifford algebras, Fourier transforms and quantum mechanics.
\newblock {\em Math. Methods Appl. Sci.} {\bf 35} (2012), 2198-2228.

\bibitem{HDeB-2013}
H. De Bie.
\newblock The kernel of the radially deformed Fourier transform.
\newblock {\em Integral Transforms Spec. Funct}. {\bf 24} (2013), 1000-1008.


\bibitem{DBR2} 
H. De Bie.
\newblock Fourier transforms in Clifford analysis in Colombo, Sabadini, Shapiro, eds. Reference Work Operator Theory, pp 1651-1672, Springer Basel, 2015. 

\bibitem{DBDSE-2017}
H. De Bie, N. De Schepper, D. Eelbode.
\newblock New Results on the Radially Deformed Dirac Operator.
\newblock {\em Complex Anal. Oper. Theory}. {\bf 11} (2017), 1283-1307.

\bibitem{DBOSS-2012}
H. De Bie, B. {\O}rsted, P. Somberg, V. Sou$\breve{c}$ek.
\newblock Dunkl operators and a family of realizations of $\mf{osp}(1|2)$.
\newblock {\em Trans. Am. Math. Soc}. {\bf 364} (2012), 3875-3902.

\bibitem{DBOSS-2013}
H. De Bie, B. {\O}rsted, P. Somberg, V. Sou$\breve{c}$ek.
\newblock The Clifford deformation of the Hermite semigroup.
\newblock SIGMA. {\bf 9} (2013), 010.


\bibitem{CASF}
R. Delanghe, F. Sommen and V. Soucek.
\newblock Clifford algebra and spinor-valued functions.
\newblock Vol. 53 of {\em Mathematics and its Applications}. Kluwer Academic Publishers Group, Dordrecht, 1992.

\bibitem{GD-2012}
G. Doetsch.
\newblock Introduction to the theory and application of the Laplace transformation.
\newblock Springer Science and Business Media, 2012.

\bibitem{DX-2014}
C.F. Dunkl, Y. Xu.
\newblock Orthogonal Polynomials of Several Variables.
\newblock Cambridge University Press, 2014.

\bibitem{EMOT-1953}
A. Erdélyi, W. Magnus, F. Oberhettinger and F.G. Tricomi.
\newblock Higher transcendental functions, Vol. ${\rm \uppercase\expandafter{\romannumeral2}}$,
\newblock McGraw-Hill, New York, 1953.

\bibitem{AE-1954}
A. Erd$\acute{e}$lyi (Ed.).
\newblock Tables of Integral Transforms, Vol. ${\rm \uppercase\expandafter{\romannumeral1}}$.
\newblock McGraw-Hill, New York, 1954.


\bibitem{GM-1991}
J.E. Gilbert, M.A.M. Murray.
\newblock Clifford algebras and Dirac operators in harmonic analysis.
\newblock Cambridge Studies in Advanced Mathematics, vol. 26. Cambridge University Press, Cambridge (1991).

\bibitem{GKMR-2014}
R. Gorenflo, A.A. Kilbas, F. Mainardi, S.V. Rogosin.
\newblock Mittag-Leffler functions, related topics and applications.
\newblock Springer Berlin, 2014.

\bibitem{MH-2008}
A.M. Mathai, H.J. Haubold.
\newblock Special Functions for Applied Scientists.
\newblock Springer, New York, 2008.

\bibitem{Hyper}
F.W.J. Olver, D.W. Lozier, R.F. Boisvert, C.W. Clark.
\newblock {\em NIST} handbook of mathematical functions.
\newblock Cambridge: Cambridge University Press, 2010.

\bibitem{PBM-1992}
A.P. Prudnikov, Y.A. Brychkov, O.I. Marichev.
\newblock Integrals and Series, Vol. ${\rm \uppercase\expandafter{\romannumeral5}}$: Inverse Laplace Transforms.
\newblock Gordon, Breach Science Publishers, Philadelphia, PA, 1992.

\bibitem{Laplace}
J. L. Schiff.
\newblock The Laplace transform: theory and application.
\newblock Springer, 1999.

\bibitem{SLL5-2009}
M.R. Spiegel, S. Lipschutz and J. Liu.
\newblock Schaum's Outline of Mathematical Handbook of Formulas and Tables (Third edition).
\newblock McGraw-Hill, 2009.



\end{thebibliography}
\end{document}